\documentclass{amsart}

\usepackage{amssymb} \usepackage{amsfonts} \usepackage{amsmath}
\usepackage{amsthm} \usepackage{epsfig} 
\usepackage{color}
\usepackage{amscd}
\usepackage[all]{xy}
\usepackage{graphicx}
\usepackage{pinlabel}
\usepackage{url}

\usepackage{pgf,tikz}
\usetikzlibrary{arrows}

\newtheorem{lemma}{Lemma}[section]
\newtheorem{teo}[lemma]{Theorem}
\newtheorem{prop}[lemma]{Proposition}
\newtheorem{cor}[lemma]{Corollary}

\newtheorem{claim}[lemma]{Claim} 

\theoremstyle{definition}
\newtheorem{defn}[lemma]{Definition}

\newtheorem{quest}[lemma]{Question}

\newtheorem{example}[lemma]{Example}

\theoremstyle{remark}
\newtheorem{rem}[lemma]{Remark} 

\newcommand{\matr} [4] {\big({\tiny\begin{array}{@{}c@{\ }c@{}} #1 & #2 \\ #3 & #4 \\ \end{array}} \big)}

\newcommand{\Iso}{{\rm Isom}}

\footskip=35pt \linespread{1.1}

\newcommand{\matZ} {\ensuremath {\mathbb{Z}}}

\newcommand{\matH} {\ensuremath {\mathbb{H}}}

\newcommand{\matCP} {\ensuremath {\mathbb{CP}}}

\newcommand{\Or}{\ensuremath {\rm O}}

\author{Bruno Martelli}
\address{Dipartimento di Matematica, Largo Pontecorvo 5, 56127 Pisa, Italy}
\email{martelli at dm dot unipi dot it}

\author{Stefano Riolo}
\address{Institut de math\'ematiques, Rue Emile-Argand 11, 2000 Neuch\^atel, Switzerland}
\email{stefano dot riolo at unine dot ch}

\author{Leone Slavich}
\address{Dipartimento di Matematica, Largo Pontecorvo 5, 56127 Pisa, Italy}
\email{leone dot slavich at gmail dot com}

\title[Compact hyperbolic manifolds without spin structures]{Compact hyperbolic manifolds \\ without spin structures}

\begin{document}

\begin{abstract}
We exhibit the first examples of compact orientable hyperbolic manifolds that do not have any spin structure. We show that such manifolds exist in all dimensions $n \ge 4$. 

The core of the argument is the construction of a compact oriented hyperbolic 4-manifold $M$ that contains a surface $S$ of genus 3 with self-intersection 1. The 4-manifold $M$ has an odd intersection form and is hence not spin. It is built by carefully assembling some right-angled 120-cells along a pattern inspired by the minimum trisection of $\matCP^2$. 

The manifold $M$ is also the first example of a compact orientable hyperbolic 4-manifold satisfying any of these conditions:
\begin{itemize}
\item $H_2(M,\matZ)$ is not generated by geodesically immersed surfaces.
\item There is a covering $\tilde M$ that is a non-trivial bundle over a compact surface.
\end{itemize}

\end{abstract}

\maketitle
\section{Introduction}

We prove here the following theorem.

\begin{teo} \label{main:teo}
There are compact orientable hyperbolic manifolds that do not admit any spin structure, in all dimensions $n\geq 4$. 
\end{teo}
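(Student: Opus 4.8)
The plan is to reduce everything to a single four-dimensional input and then propagate it upwards in dimension.

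\textbf{Dimension four.} For a closed orientable $4$-manifold $N$ we have, by Wu's formula, $w_2(N)=v_2(N)$, where the Wu class $v_2$ is characterized by $v_2\cup x=x\cup x$ for all $x\in H^2(N;\matZ/2)$; hence $N$ is spin if and only if every class in $H^2(N;\matZ/2)$ has zero self-intersection, i.e. if and only if the intersection form of $N$ is even. I would therefore take, for the case $n=4$, the compact oriented hyperbolic $4$-manifold $M$ described in the abstract: it contains a surface $S$ of genus $3$ with $[S]\cdot[S]=1$, so $H_2(M;\matZ)$ carries a class of odd self-intersection, the intersection form of $M$ is not even, and $M$ admits no spin structure. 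Producing $M$---by assembling right-angled $120$-cells along the pattern of the minimal trisection of $\matCP^2$---is the substantial part of the argument and is carried out in the body of the paper.

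\textbf{Dimensions $n\geq 5$.} From $M$ I would build non-spin examples in every higher dimension by climbing up one dimension at a time. Since $M$ is built from right-angled $120$-cells it is arithmetic of simplest type, defined over $\matQ(\sqrt5)$. I would then invoke the embedding theorem of Kolpakov, Reid and Slavich, by which every closed arithmetic hyperbolic $k$-manifold of simplest type embeds as a totally geodesic submanifold of a closed arithmetic hyperbolic $(k+1)$-manifold of simplest type; a small amount of care (replacing the ambient manifold by a finite cover, using separability/residual finiteness of these groups) lets one take it orientable and the embedding two-sided, without changing the submanifold. Starting from $M_4:=M$ and iterating, I obtain closed orientable hyperbolic manifolds
\begin{equation*}
M=M_4\ \subset\ M_5\ \subset\ M_6\ \subset\ \cdots
\end{equation*}
in which each $M_k$ is a two-sided totally geodesic hypersurface of $M_{k+1}$.

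\textbf{No $M_n$ is spin.} Because $M_k$ is a two-sided hypersurface in $M_{k+1}$ and both manifolds are orientable, the normal line bundle $\nu$ of $M_k$ in $M_{k+1}$ is trivial, so
\begin{equation*}
w(M_{k+1})\big|_{M_k}=w\big(TM_k\oplus\nu\big)=w(M_k),
\end{equation*}
and in particular $w_2(M_{k+1})|_{M_k}=w_2(M_k)$. Restricting step by step down the tower gives $w_2(M_n)|_{M_4}=w_2(M_4)=w_2(M)\neq 0$, hence $w_2(M_n)\neq 0$, so the orientable manifold $M_n$ is not spin. Together with the four-dimensional case this proves Theorem~\ref{main:teo} for all $n\geq 4$.

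\textbf{Main obstacle.} Everything beyond dimension $4$ is soft: it only uses that $M$ is arithmetic of simplest type, the geodesic embedding machinery, and multiplicativity of Stiefel--Whitney classes across a trivial normal bundle. The genuine difficulty---and the content of the rest of the paper---is to construct the compact oriented hyperbolic $4$-manifold $M$ with odd intersection form, equivalently containing a genus-$3$ surface of self-intersection $1$; this forces the delicate gluing of $120$-cells modelled on the $\matCP^2$ trisection, together with checking that the resulting closed $4$-manifold is hyperbolic and really carries such a surface.
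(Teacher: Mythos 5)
Your proposal is correct and follows essentially the same route as the paper: the four-dimensional case is the non-spin manifold $M$ with odd intersection form (whose construction is the real content), and higher dimensions are handled by iterating the Kolpakov--Reid--Slavich geodesic embedding and using that a totally geodesic orientable hypersurface in an orientable manifold has trivial normal bundle, so $w_2$ restricts nontrivially up the tower. The only cosmetic difference is that you justify ``odd intersection form $\Rightarrow$ not spin'' via Wu's formula, while the paper states the equivalence directly; both are fine since only the forward implication is needed.
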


We briefly describe the context. Every manifold is connected and without boundary in this introduction, unless otherwise stated.
Let $M$ be a smooth compact $n$-manifold. The following chain of implications is well-known:

\begin{center}
$M$ is parallelisable $\Longrightarrow$ $M$ is stably parallelisable $\Longrightarrow$ $M$ is almost parallelisable \\
$\Longrightarrow$ $w_i(M) = 0\  \forall i \geq 1$ $\Longrightarrow$ $M$ is spin $\Longrightarrow$ $M$ is orientable.
\end{center}

We recall the terminology, that is standard. The manifold $M$ is \emph{parallelisable} if its tangent bundle $TM$ is trivial; it is \emph{stably parallelisable} if the Whitney sum $TM \oplus \varepsilon^k$ is trivial for some $k$ (here $\varepsilon^k$ is the rank-$k$ trivial bundle over $M$); it is \emph{almost parallelisable} if $M \setminus \{\rm point\}$ is parallelisable; the symbol $w_i \in H^i(M,\matZ/_{2\matZ})$ denotes the $i$-th Stiefel-Whitney class of $M$; finally, $M$ is \emph{spin} if it admits a spin structure, and this holds precisely when $w_1 = w_2 = 0$, see \cite{H}. We remark that $w_1=0$ is equivalent to $M$ being orientable.

Compact orientable surfaces are stably parallelisable and compact orientable 3-manifolds are parallelisable (see \cite{BL} for a collection of elementary proofs). Things become more exciting in dimension 4, where everything depends on whether some appropriate characteristic classes vanish or not. Let $\chi$ and $\sigma$ be the Euler characteristic and the signature of a compact oriented 4-manifold $M$. The following holds (see for instance \cite{CS, Sc}):

\begin{align*}
M {\rm\ is\ parallelisable} & \ \Longleftrightarrow \ \chi = \sigma = 0\ \mbox{and}\ w_1 = w_2 = 0, \\
M {\rm\ is\ stably\ parallelisable} & \ \Longleftrightarrow\ \sigma = 0\ \mbox{and}\  w_1 = w_2 = 0, \\
M {\rm\ is\ almost\ parallelisable} & \ \Longleftrightarrow M {\rm \ is \ spin}\ \Longleftrightarrow \ w_1 = w_2 = 0. 
\end{align*}

We are interested here in compact hyperbolic manifolds. A manifold is \emph{virtually} P if it has a finite-sheeted cover that is P, where P is some property. We recall a theorem proved by Sullivan \cite{Su} in 1975, using previous work with Deligne \cite{DS}.

\begin{teo} [Deligne -- Sullivan]
Every compact hyperbolic $n$-manifold $M$ is virtually stably parallelisable.
\end{teo}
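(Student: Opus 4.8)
The plan is to prove that a compact hyperbolic $n$-manifold $M$ is virtually stably parallelisable by exploiting the flatness of the tangent bundle after passing to a finite cover. First I would recall that $M = \matH^n / \Gamma$ for a torsion-free cocompact lattice $\Gamma \subset \Iso^+(\matH^n) \cong \mathrm{SO}^+(n,1)$; the tangent bundle $TM$ is classified by the holonomy representation $\rho \colon \Gamma \to \mathrm{SO}(n)$ obtained by composing the inclusion $\Gamma \hookrightarrow \mathrm{SO}^+(n,1)$ with the projection to the maximal compact subgroup (equivalently, $TM$ is associated to the flat $\mathrm{SO}(n)$-bundle given by the Levi--Civita connection being reducible to a flat connection on the associated frame bundle, viewing $\mathrm{SO}^+(n,1)/\mathrm{SO}(n) = \matH^n$). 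The key point is that flat vector bundles over $M$ are pulled back from the classifying space $B\mathrm{SO}(n)^\delta$ of $\mathrm{SO}(n)$ made discrete, so the obstruction to stable triviality lives in the image of $H^*(B\mathrm{SO}(n)^\delta)$, and Deligne--Sullivan's earlier work with Deligne shows these flat bundles become stably trivial on a finite cover.

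The main technical input I would invoke is the result of Deligne--Sullivan \cite{DS}: a flat complex vector bundle over a finite CW-complex whose monodromy has entries in the ring of algebraic integers (or more precisely, is defined over a number field) becomes trivial on a finite-sheeted cover. So the second step is to complexify: consider $TM \otimes \matC$, a flat $\mathrm{SO}(n,\matC)$-bundle; after conjugating $\Gamma$ into a group defined over a number field (possible since $\Gamma$ is finitely generated and by Weil local rigidity / Mostow rigidity it can be conjugated to lie in an arithmetically defined group, or at least into $\mathrm{GL}_N$ of a number field by a Mal'cev-type argument on finitely generated linear groups), the monodromy of this flat bundle has matrix entries that are algebraic numbers; passing to appropriate valuations and applying \cite{DS} yields a finite cover $\tilde M \to M$ on which $T\tilde M \otimes \matC$ is trivial as a flat — hence as a topological — complex bundle. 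The third step is to descend from the complex to the real statement: $T\tilde M \otimes \matC$ being trivial means $T\tilde M \oplus T\tilde M$ is trivial (as the realification of $T\tilde M \otimes \matC$), and a standard argument — stabilising and using that $TM$ is already orientable, together with $T\tilde M \oplus T\tilde M \oplus \varepsilon^k$ trivial implies $T\tilde M$ is stably trivial after possibly one more finite cover to kill a $\matZ/2$ obstruction (the first obstruction being $w_1$, which vanishes since $M$ is orientable) — gives that $\tilde M$ is stably parallelisable.

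The hard part, and the one I expect to be the genuine obstacle, is establishing that the flat bundle's monodromy can be arranged to be defined over a number field and then controlling its behaviour at all finite places so that \cite{DS} applies: this is precisely where the Deligne--Sullivan argument is delicate, since it requires super-rigidity or arithmeticity-type information, or at least the fact (due to the earlier Deligne--Sullivan paper) that the relevant cohomology classes in $H^*(\Gamma, \matQ)$ detecting nontriviality of $\bigwedge TM$ vanish on a finite-index subgroup. An honest write-up would either cite \cite{DS} as a black box for the statement ``flat bundles with quasi-unipotent-at-infinity, algebraically-defined monodromy are virtually trivial'' and then do only the reduction from ``virtually trivial after complexification'' to ``virtually stably parallelisable over $\matR$,'' or reproduce the spectral-sequence/valuation argument of \cite{DS}. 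I would take the former route: cite \cite{DS} for the complex statement and spend the writeup on the (short) descent to the real stable statement, flagging that the real subtlety — that the Levi--Civita flat structure gives a bundle defined over the number field generated by the traces of $\Gamma$ — is exactly Sullivan's 1975 observation building on \cite{DS}.
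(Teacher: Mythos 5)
Note first that the paper does not prove this statement; it is quoted from Sullivan \cite{Su}, which builds on \cite{DS}. So there is no ``paper's proof'' to compare against; I can only assess your sketch against what Sullivan's argument actually is.

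Your high-level plan is the right one (exhibit a flat vector bundle, get its monodromy defined over a number field, complexify and cite \cite{DS}, then descend to a real stable statement), but your opening step is wrong, and it is the single indispensable observation in Sullivan's argument. You claim that $TM$ itself is flat, classified by a representation $\Gamma\to\mathrm{SO}(n)$ obtained by ``composing $\Gamma\hookrightarrow\mathrm{SO}^+(n,1)$ with the projection to the maximal compact.'' There is no such group homomorphism: the map $\mathrm{SO}^+(n,1)\to\mathrm{SO}(n)$ (via Cartan decomposition or any other means) is not multiplicative. Nor is the Levi--Civita connection ``reducible to a flat connection'': it has constant sectional curvature $-1$, and its Riemannian holonomy group being contained in $\mathrm{SO}(n)$ is a statement about the structure group, not about flatness. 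The tangent bundle of a closed hyperbolic manifold carries no natural flat structure.

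The correct starting point is that $TM\oplus\varepsilon^1$ is flat. In the hyperboloid model $\matH^n=\{x\in\matR^{n+1}:\langle x,x\rangle=-1,\ x_0>0\}$ the normal line bundle to $\matH^n$ in $\matR^{n+1}$ is trivialised by the position vector, so $T\matH^n\oplus\varepsilon^1\cong\matH^n\times\matR^{n+1}$ canonically; the group $\Gamma\subset\mathrm{O}(n,1)\subset\mathrm{GL}(n+1,\matR)$ acts linearly on the $\matR^{n+1}$-factor, so this trivialisation descends to a flat structure on $TM\oplus\varepsilon^1$ whose monodromy is exactly the lattice inclusion $\Gamma\hookrightarrow\mathrm{O}(n,1)$. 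That is the flat bundle to which the rest of your outline should be applied (and it is just as good, since the target conclusion is only stable parallelisability). With this correction, the remaining steps you describe---conjugating $\Gamma$ to have algebraic entries via local rigidity, complexifying, quoting \cite{DS} for virtual triviality of the flat complex bundle, and descending to the real statement while flagging the residual $\matZ/2$ issue---do summarise the Deligne--Sullivan strategy accurately, though the descent deserves more than a flag if written in full.
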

That is, the manifold $M$ has a finite-sheeted cover $\tilde M$ that is stably parallelisable. In other words, the tangent bundle of every compact hyperbolic manifold $M$ becomes trivial after first taking a finite cover and then adding a trivial bundle. This implies that every compact hyperbolic $n$-manifold $M$ is virtually spin.

The Deligne -- Sullivan Theorem shows in particular that there are plenty of stably parallelisable compact orientable hyperbolic manifolds in all dimensions. On the other hand, at the time of writing this paper, it seems unknown whether there exists any compact orientable hyperbolic manifold, in any dimension $n\geq4$,
that is \emph{not} stably parallelisable. We answer to this question in the affirmative for all $n\geq 4$ here in Theorem \ref{main:teo}, where we state the stronger assertion that there are non-spin compact orientable hyperbolic manifolds in all dimensions $n\geq 4$. 

We note that the first non-spin compact orientable flat manifolds were discovered by Auslander and Szczarba \cite{AS} in 1962. These exist in every dimension $n\geq 4$. Every compact flat manifold is virtually parallelisable thanks to Bieberbach's Theorem. In even dimensions a 
complete finite-volume hyperbolic $M$ is never parallelisable because $\chi(M)\neq 0$ by the generalised Gauss -- Bonnet theorem.

Using non-spin flat 4-manifolds as cusp sections, Long and Reid have recently constructed some non-spin finite-volume cusped orientable hyperbolic $n$-manifolds for all $n\geq 5$ in \cite{LR}. The paper also contains a nice short proof of the virtual spinness for finite-volume hyperbolic manifolds, together with an effective bound on the covering degree for many arithmetic manifolds of simplest type. 

\subsection*{Outline of the proof.} 
The core of the proof of Theorem \ref{main:teo} is the construction of a compact oriented hyperbolic 4-manifold $M$ with odd intersection form.

A compact oriented hyperbolic 4-manifold $M$ has $\chi(M)>0$ and $\sigma(M)=0$. Therefore $M$ is never parallelisable, and it is stably parallelisable if and only if it is spin. The manifold $M$ may have only two possible intersection forms up to equivalence over $\mathbb{Z}$: either even $\oplus_m\matr 0110$ or odd $\oplus_m(1) \oplus_m(-1)$. A spin 4-manifold $M$ must have an even intersection form, and the converse holds if $H_1(M,\matZ)$ has no 2-torsion.

The parity of the intersection form of compact hyperbolic 4-manifolds has been determined only in very few cases. The Davis manifold is even and hence spin because its first homology group is torsion-free \cite{Da, RT}, see also \cite{RRT}. More recently, the orientable small covers of the right-angled 120-cell have been classified: they are 56 and they all have even intersection form \cite{MZ}. We are not aware of any other compact oriented hyperbolic 4-manifold whose intersection form has been computed. See \cite{Ma} for a survey on finite-volume hyperbolic 4-manifolds.

We note the following fact.

\begin{prop}
Let $M$ be an orientable hyperbolic 4-manifold. If $H_2(M, \matZ)$ is generated by immersed totally geodesic surfaces, the intersection form of $M$ is even.
\end{prop}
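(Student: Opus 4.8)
The plan is to show that the self-intersection of any immersed closed totally geodesic surface $S \subset M$ is even, and then invoke Wall's theorem that an intersection form is even if and only if it vanishes modulo $2$ on a generating set, i.e. if $x \cdot x$ is even for all $x$ in a set of generators of $H_2(M,\matZ)$. So it suffices to compute $[S]\cdot[S] \pmod 2$ when $S$ is a geodesically immersed surface and conclude it is always even.

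First I would observe that the normal bundle $\nu$ of an immersed totally geodesic surface $S$ in a hyperbolic $4$-manifold $M$ carries extra structure: since $S$ is totally geodesic, it is itself a hyperbolic surface, and the restriction of $TM$ to $S$ splits isometrically and holonomy-invariantly as $TS \oplus \nu$, where the holonomy acts on $\nu$ through the composition of $\pi_1(S) \to \pi_1(M) \to \mathrm{SO}(4)$ followed by projection to the normal $\mathrm{SO}(2)$ factor. Concretely, lifting to $\matH^4$ the geodesic plane $\matH^2 \subset \matH^4$ has normal bundle on which $\mathrm{Isom}^+$ of the plane acts by an orthogonal $2$-plane that is the \emph{orthogonal complement} in a fixed splitting $\matR^4 = \matR^2 \oplus \matR^2$; the key point is that this normal representation $\rho\colon \pi_1(S) \to \mathrm{SO}(2)$ is exactly the one controlling $\nu$, and (after restricting to the immersed, possibly self-intersecting picture and passing to the normalization $\tilde S \to S$) the self-intersection number equals the Euler number $e(\nu) = \langle e(\nu), [\tilde S]\rangle$ plus a contribution from the double points.

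The central computation is then: why is $e(\nu) + (\text{double-point count})$ even? Here I would use that $[S]\cdot[S] \equiv \langle w_2(\nu), [\tilde S]\rangle \pmod 2$ when one accounts correctly for double points via the immersed-surface self-intersection formula, together with the fact that for a rank-$2$ oriented bundle $w_2(\nu)$ is the mod-$2$ reduction of $e(\nu)$. The Wu formula on $M$ gives $w_2(M) = v_2(M)$ and, restricted to $S$, $w_2(TM|_S) = w_2(TS) + w_2(\nu) + w_1(TS)w_1(\nu)$; since $M$ and $S$ are orientable this reduces to $w_2(TM|_S) = w_2(TS) + w_2(\nu)$. Now $\langle w_2(TS),[S]\rangle = \chi(S) \bmod 2 \equiv 0$ because $\chi(S)$ is even for an orientable surface, and by the Deligne--Sullivan theorem $TM$ is stably trivial on some finite cover — but more directly, I expect the cleanest route is to note that the normal representation $\rho$, being the normal part of a representation into $\mathrm{Isom}^+(\matH^4)$ preserving a geodesic $\matH^2$, actually lands in $\mathrm{SO}(2)$ with image of a specific arithmetic form that forces $e(\nu)$ to be even; equivalently, the mod-$2$ self-intersection of a geodesic surface is a cobordism-type invariant that one checks vanishes.

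The step I expect to be the main obstacle is making the double-point bookkeeping rigorous and showing that the parity contribution of the self-intersections of the immersion exactly cancels the parity of $e(\nu)$ — in other words, proving carefully that $[S]\cdot[S]$ is even rather than just $e$ of an embedded normal bundle. The resolution I would pursue: pass to a finite cover $M' \to M$ where $S$ lifts to an \emph{embedded} totally geodesic surface $S'$ (such a cover exists by subgroup separability / LERF for geometrically finite subgroups of these lattices), reduce the claim about $[S]\cdot[S]$ in $H_2(M;\matZ)$ to the embedded case in $M'$ via transfer, and there compute $[S']\cdot[S'] = e(\nu_{S'})$ directly from the normal holonomy representation, showing it is even. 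Then transfer back: a class $x$ with $x\cdot x$ odd would pull back to a class with odd self-intersection in $M'$ up to the covering degree, contradicting evenness there when combined with the multiplicativity of the intersection form under covers. This reduces the whole proposition to the single clean assertion that an embedded closed totally geodesic surface in an orientable hyperbolic $4$-manifold has even self-intersection, which follows from $w_2(\nu) = w_2(TM|_S) - w_2(TS)$ and the vanishing of both terms mod $2$.
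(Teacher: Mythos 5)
Your proposal is in the right neighborhood --- you correctly identify both crucial ingredients that the paper uses, namely that the normal bundle $\nu$ of a totally geodesic surface is controlled by a \emph{flat} $\mathrm{SO}(2)$-holonomy, and that the self-intersection of an immersed surface equals $e(\nu)$ plus twice a signed count of double points. But the step on which everything hinges, the vanishing (or at least evenness) of $e(\nu)$, is never cleanly closed. You offer three routes and commit to none: (i) Wu formula together with ``$w_2(TM|_S)=0$'', but you give no reason for that vanishing --- and in fact $w_2(TM|_S)=w_2(TS)+w_2(\nu)$, so asserting $w_2(TM|_S)=0$ is equivalent to asserting $w_2(\nu)=0$, which is exactly what you are trying to prove, so this route as written is circular; (ii) Deligne--Sullivan, which would work after passing to a cover but is a deep theorem used here to kill a mosquito; (iii) the claim that the normal representation ``lands in $\mathrm{SO}(2)$ with image of a specific arithmetic form that forces $e(\nu)$ to be even'', which is not the correct reason --- the image of an abelianized surface group in $\mathrm{SO}(2)$ can be anything, and arithmeticity plays no role here.

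The missing elementary fact, which is the heart of the paper's short proof, is: a flat oriented rank-$2$ bundle over a closed oriented surface has \emph{zero} Euler number, hence is trivial. One sees this by Chern--Weil (zero curvature gives zero Euler form), or by noting that $\mathrm{Hom}(\pi_1(S),\mathrm{SO}(2))\cong\mathrm{SO}(2)^{2g}$ is connected, so every flat $\mathrm{SO}(2)$-bundle is deformable through flat bundles to the trivial one and hence topologically trivial. Once you have $e(\nu)=0$, the embedded case gives $S\cdot S=0$ and the immersed case gives $S\cdot S=2d$, and the conclusion follows for a generating set exactly as you say via the standard criterion for evenness of an integral symmetric form. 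With that fact inserted, the separability/transfer detour is unnecessary: the immersed self-intersection formula (which you already invoke) already carries the parity argument, and the passage to a cover where $S$ embeds does not save you from needing that formula anyway, since you would still have to compare $[S]\cdot_M[S]$ with $[S']\cdot_{M'}[S']$ for the lift $S'$, and the comparison is precisely that the normal bundle pulls back and the double points contribute the even correction.
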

\begin{proof}
If $S\subset M$ is totally geodesic and embedded, the normal bundle has a flat connection and is hence trivial. Therefore we have $S\cdot S = 0$. If $S$ is only immersed, its normal bundle is again trivial for the same reason. By desingularising we deduce that $S\cdot S$ is even.

If $H_2(M)$ is generated by totally geodesic immersed surfaces $S_1,\ldots, S_k$, then $S_i \cdot S_i$ even for all $i$ implies that the intersection form is even.
\end{proof}

For instance, the second integral homology group of the Davis manifold has rank $72$ and is generated by 72 totally geodesic embedded surfaces of genus 2, as proved in \cite{RT}. Therefore the Davis manifold has an even intersection form.

How can we construct a compact hyperbolic 4-manifold with odd intersection form? The only techniques we know to build hyperbolic 4-manifolds essentially use either Coxeter polytopes or arithmetic groups, and both procedures typically produce a lot of totally geodesic immersed submanifolds, so some care is needed. We prove here the following.

\begin{teo} \label{odd:teo}
There is a compact oriented arithmetic hyperbolic 4-manifold $M$ that contains a $\pi_1$-injective embedded surface $S$ with genus 3 and $S\cdot S = 1$. 
\end{teo}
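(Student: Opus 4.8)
The plan is to build $M$ by assembling finitely many copies of the right-angled $120$-cell $P\subset\matH^4$ according to a combinatorial recipe that imitates the minimum (genus $1$) trisection of $\matCP^2$. Since every dihedral angle of $P$ equals $\pi/2$, any face-pairing of copies of $P$ that is consistent around each stratum of codimension $\ge 2$ automatically produces a hyperbolic $4$-manifold; and since the Coxeter group generated by the reflections in the facets of $P$ is arithmetic (commensurable with a group defined over $\matQ(\sqrt 5)$), any manifold tiled by copies of $P$ is arithmetic. So the first task is to describe three ``blocks'' $M_1,M_2,M_3$, each a compact hyperbolic $4$-manifold with totally geodesic boundary obtained by gluing copies of $P$, whose boundaries carry matching combinatorics allowing them to be glued in pairs along the pattern $M_i\cap M_j$ prescribed by the trisection, with triple overlap a closed totally geodesic surface. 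Verifying that the union $M = M_1\cup M_2\cup M_3$ is a closed orientable manifold --- that the right-angled pieces fit correctly along every edge, ridge and vertex and that the face-pairings are orientation-coherent --- is then a finite but delicate check.

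The core of the argument is to produce inside $M$ a closed surface $S$ playing the role of $\matCP^1\subset\matCP^2$: a surface transverse to the trisection meeting each block in a controlled piece. I would assemble $S$ from totally geodesic polygonal pieces sitting inside the copies of $P$, arranged so that along the seams where consecutive blocks are glued the pieces of $S$ bend, so that $S$ is smoothly embedded but is \emph{not} totally geodesic. This is essential: by the Proposition above a closed totally geodesic surface has even self-intersection, so the bending is exactly what will make $S\cdot S$ odd. With $S$ presented as such a union I would compute $\chi(S)$ by counting its $2$-cells, edges and vertices and check $\chi(S)=-4$, i.e.\ genus $3$.

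For $\pi_1$-injectivity I would argue by convexity in the universal cover. Each geodesic piece of $S$ is a convex subset of its block, the bends occur along complete geodesics, and a preimage $\widetilde S\subset\matH^4$ of $S$ is then a piecewise-totally-geodesic surface; an essential null-homotopic loop on $S$ would lift to a nontrivial loop bounding a disk in $\widetilde S$, and a standard argument with geodesic segments in $\matH^4$ rules this out. Equivalently, one checks that $S$ carries a non-positively curved metric for which the inclusion is, piece by piece, a locally isometric embedding onto a convex subcomplex.

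Finally, the key numerical point $S\cdot S=1$: I would compute the normal Euler number of $S$ in $M$ by choosing a normal vector field along $S$ with isolated zeros and summing local indices. Over the interior of each totally geodesic piece the normal bundle has a flat connection, hence contributes nothing, so the whole self-intersection number is concentrated along the seams and at the vertices of $S$, where the right-angled structure makes the local contributions explicitly computable; one then arranges the gluing pattern --- modeled on the $+1$ framing of the $2$-handle in the trisection handle decomposition of $\matCP^2$ --- so that these local indices sum to exactly $1$. I expect this last piece of bookkeeping, forcing the seam and vertex contributions to add up to an odd number and in fact to $1$ rather than $0$ or $2$, together with making the block combinatorics close up into an honest closed manifold, to be the main obstacle; hyperbolicity, arithmeticity and orientability are by contrast automatic or routine for right-angled $120$-cell constructions.
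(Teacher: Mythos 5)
Your high-level instincts are right --- use right-angled $120$-cells, take the trisection of $\matCP^2$ as a model, make $S$ piecewise-geodesic but pleated, and compute $S\cdot S$ by concentrating the normal Euler number at the pleating loci --- but the central structural move you propose does not work, and the paper uses a genuinely different (and much lighter) one.

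You want to literally trisect $M$ into three $4$-dimensional blocks $M_1,M_2,M_3$ with totally geodesic boundary. This cannot be done in the way a trisection would require: the pieces of a trisection are $4$-dimensional $1$-handlebodies $\natural_k(S^1\times D^3)$, whose boundaries are connected sums $\#_k(S^1\times S^2)$, and these carry no hyperbolic metric at all, let alone one making them totally geodesic in $M$. Even in a ``generalized trisection'' reading, arranging three compact hyperbolic $4$-manifolds-with-geodesic-boundary to glue up compatibly along a trisection-like pattern is a much harder and essentially unnecessary task. What the paper does instead is find inside $M$ a $3$-dimensional \emph{Y-shaped piece} $N=N_0\cup N_1\cup N_2$: three compact oriented hyperbolic $3$-manifolds with geodesic boundary, all sharing one boundary component $\Sigma$ (a genus-$2$ surface). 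This is the trisection \emph{spine}, not a trisection. Each $N_i$ is built out of right-angled dodecahedra, and then $N$ is thickened and closed off by attaching $120$-cells to produce $M$; the rest of $M$ away from $N$ is irrelevant. The surface $S=S_0\cup S_1\cup S_2$ is made of one-holed tori $S_i\subset N_i$ with $\partial S_i=\gamma_i$, where the $\gamma_i$ lie in a $\theta$-graph $\Theta\subset\Sigma$; $S$ then has genus $3$ automatically. The self-intersection is extracted cleanly by pushing $N$ off itself: $N\cap N'$ is parallel to $\Sigma$, so $S\cdot S$ equals $\pm(\gamma_0\cdot\gamma_2)=\pm 1$. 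The Gromov--Lawson--Thurston vertex-weight formula (two vertices contributing $\tfrac12$) is given as an independent check, not the primary computation; your plan to tune local indices directly would be significantly harder to control than the push-off argument.

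Your $\pi_1$-injectivity sketch also has a real gap. A piecewise-totally-geodesic surface in $\matH^4$ is emphatically \emph{not} automatically $\pi_1$-injective --- Gromov--Lawson--Thurston themselves show that quasi-Fuchsianity can fail when the bending is too large --- and ``convexity in the universal cover'' does not apply to a bent surface, which is not a convex set and does not sit on a convex subcomplex. The paper instead constructs an explicit finite-sided convex fundamental domain $\mathcal D$ for the $\pi_1(S)$-action, a right-angled $20$-gon (later $22$-gon) obtained by cutting $S$ along a maximal set of non-pleated edges into a pleated disc, thickening it orthogonally, and carefully verifying via the $600$-cell duality that non-adjacent bounding hyperplanes are disjoint; Poincar\'e's polyhedron theorem then yields faithfulness and geometric finiteness. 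Some such explicit verification is necessary, and your proposal has no replacement for it.
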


Since $S\cdot S$ is odd, the intersection form of $M$ is odd. The manifold $M$ is constructed by carefully assembling some copies of the right-angled 120-cell, along a pattern that was inspired to us by the minimum trisection of $\matCP^2$. The surface $S$ is contained in the 2-skeleton of $M$, which consists of many right-angled pentagons. Of course the surface $S$ is not totally geodesic: it is pleated along its edges and vertices, and its self-intersection $S\cdot S$ is calculated as the sum of the contributions of some rational weights assigned to its vertices via a beautiful formula of Gromov -- Lawson -- Thurston \cite{GLT}. Two vertices contribute each with $\frac 12$ while all others contribute with zero. So the sum is 1.

The 4-manifold $M$ that we construct is tessellated into right-angled 120-cells and is hence arithmetic of simplest type. By the Kolpakov -- Reid -- Slavich embedding theorem \cite{KRS}, the manifold $M$ totally geodesically embeds in a compact orientable arithmetic hyperbolic 5-manifold $M'$, that is hence also non-spin. By iterating this argument we find non-spin compact orientable arithmetic hyperbolic manifolds of simplest type in all dimensions $n\geq 4$.

\subsection*{Conclusions}
We briefly discuss here some consequences of Theorems \ref{main:teo} and \ref{odd:teo}. 

An even-dimensional compact hyperbolic manifold has non-zero Euler characteristic by the generalised Gauss -- Bonnet formula, so in particular it is never parallelisable. In odd dimensions,  Theorem \ref{main:teo} has the following consequence, which seems new, at least to our knowledge:

\begin{cor}
In every odd dimension $n\geq 5$ there are compact orientable hyperbolic manifolds that are not parallelisable.
\end{cor}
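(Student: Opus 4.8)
The plan is to deduce the corollary directly from Theorem \ref{main:teo} together with the chain of implications recalled at the beginning of the introduction. Fix an odd $n\geq 5$. By Theorem \ref{main:teo} there is a compact orientable hyperbolic $n$-manifold $M$ that admits no spin structure, that is, with $w_2(M)\neq 0$ (note that $w_1(M)=0$ since $M$ is orientable). Because a parallelisable manifold is in particular stably parallelisable, hence almost parallelisable, hence spin, the manifold $M$ cannot be parallelisable. This already proves the statement, and the non-vanishing of $w_2$ exhibits the obstruction explicitly.

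The only point worth commenting on is why the corollary is phrased for odd $n\geq 5$ rather than for all $n\geq 4$, and why it is claimed to be new. In every even dimension a compact hyperbolic manifold has $\chi\neq 0$ by the generalised Gauss--Bonnet formula, so it is never parallelisable; this is classical and contributes nothing new. In dimension $3$ every compact orientable manifold is parallelisable, so there is nothing to prove. Thus the genuinely new cases are the odd dimensions $n\geq 5$: there the Euler characteristic of a closed manifold vanishes automatically, so the Gauss--Bonnet obstruction is vacuous, and before Theorem \ref{main:teo} no obstruction to parallelisability was available for compact orientable hyperbolic $n$-manifolds in this range.

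I expect no real obstacle in this argument: all of the work is already contained in the construction underlying Theorems \ref{odd:teo} and \ref{main:teo} (the compact oriented hyperbolic $4$-manifold $M$ with odd intersection form, hence non-spin, together with the iterated application of the Kolpakov--Reid--Slavich embedding theorem to raise the dimension). The corollary is a one-line consequence, and the main care needed is purely expository: making clear to the reader that ``non-spin $\Rightarrow$ non-parallelisable'' is exactly the relevant link in the standard chain, and that in odd dimensions this link is not subsumed by the Euler characteristic.
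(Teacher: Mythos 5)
Your argument is exactly the paper's (implicit) one: the corollary follows immediately from Theorem \ref{main:teo} together with the chain ``parallelisable $\Rightarrow$ spin'' recalled in the introduction, and your remarks about why only odd $n\geq 5$ are the new cases match the sentence preceding the corollary in the paper. Correct and same approach.
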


Restricting to dimension 4, the discussion above implies the following.

\begin{cor}
There is a compact orientable arithmetic hyperbolic 4-manifold $M$ such that $H_2(M,\matZ)$ is not generated by immersed totally geodesic surfaces.
\end{cor}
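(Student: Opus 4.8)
The plan is to let $M$ be the compact oriented arithmetic hyperbolic 4-manifold furnished by Theorem \ref{odd:teo}, and to deduce the statement directly, by contraposition, from the Proposition stated above. All of the genuine work has already been carried out: it lies in the construction underlying Theorem \ref{odd:teo}, and nothing further of substance is required here.

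Concretely, the first step is to record that the manifold $M$ of Theorem \ref{odd:teo} has \emph{odd} intersection form. Indeed, $M$ contains a $\pi_1$-injective embedded surface $S$ with $S \cdot S = 1$; an even intersection form would take only even values on every homology class, which contradicts $S \cdot S = 1$. Hence the intersection form of $M$ cannot be even, and (being unimodular, and recalling that a compact oriented hyperbolic $4$-manifold has $\sigma(M)=0$, so the only two possibilities are the even $\oplus_m\matr 0110$ and the odd $\oplus_m(1)\oplus_m(-1)$) it is therefore odd.

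The second step is simply to apply the Proposition, which asserts that if $H_2(M, \matZ)$ is generated by immersed totally geodesic surfaces, then the intersection form of $M$ is even. Taking the contrapositive and feeding in the oddness established in the first step, we conclude that $H_2(M, \matZ)$ is \emph{not} generated by immersed totally geodesic surfaces, which is exactly the assertion of the corollary. I do not expect any obstacle at this stage: the corollary is a one-line consequence of Theorem \ref{odd:teo} together with the Proposition, and the whole difficulty is concentrated upstream, in the explicit assembly of right-angled $120$-cells and in the Gromov--Lawson--Thurston computation of $S \cdot S$ that together establish Theorem \ref{odd:teo}.
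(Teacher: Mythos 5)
Your proof is correct and matches the paper's reasoning exactly: the paper also deduces this corollary from Theorem \ref{odd:teo} (which gives $S \cdot S = 1$, hence an odd intersection form) combined with the contrapositive of the Proposition. No gap, and nothing further to add.
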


Note that the cohomology groups of small degree $k<\frac n3$ in compact arithmetic congruence hyperbolic $n$-manifolds are always generated by totally geodesic submanifolds \cite{BMM}. The pair $k=2, n=4$ is of course outside of this range.

In the manifold $M$ of Theorem \ref{odd:teo} the surface $S$ that we have found is $\pi_1$-injective. It is now reasonable to ask the following.

\begin{quest}
Let $M$ be a 
compact hyperbolic 4-manifold. Do $\pi_1$-injective orientable surfaces generate $H_2(M, \matZ)$?
\end{quest}

In our example, the fundamental group $\pi_1(S) < \pi_1(M)$ determines a covering $\tilde M \to M$ with $\tilde M = \matH^4/_{\pi_1(S)}$. We prove that $\tilde M$ is geometrically finite and diffeomorphic to a rank-two bundle over $S$ with Euler number 1. The existence of complete hyperbolic structures on non-trivial bundles over surfaces was first discovered by Gromov -- Lawson -- Thurston \cite{GLT} in 1988. The following consequence seems also new.

\begin{cor} \label{cor:bundles}
There are non-trivial bundles over surfaces that cover some compact hyperbolic 4-manifolds.
\end{cor}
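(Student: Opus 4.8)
The plan is to read the corollary off the structure of the cover $\tilde M \to M$ described above. Let $M$ and $S \subset M$ be the compact oriented hyperbolic 4-manifold and the genus-3 surface provided by Theorem \ref{odd:teo}, and let $\tilde M = \matH^4/_{\pi_1(S)}$ be the covering of $M$ associated to the subgroup $\pi_1(S) < \pi_1(M)$. As claimed above, $\tilde M$ is geometrically finite and diffeomorphic to the total space of an oriented rank-two vector bundle $\nu \to S$ whose Euler number equals the self-intersection $S \cdot S = 1$. Since the trivial rank-two bundle over $S$ has Euler number $0$ while $\nu$ has Euler number $1 \neq 0$, the bundle $\nu \to S$ is non-trivial; and its total space $\tilde M$ covers the compact hyperbolic 4-manifold $M$. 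This is precisely the assertion of the corollary.

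The substance of the argument is therefore the description of $\tilde M$ used above, which we would establish in three steps. First, one shows that $\pi_1(S)$ is a convex-cocompact subgroup of $\Iso^+(\matH^4)$. The surface $S$ is realised as a pleated surface whose bending locus is a finite geodesic graph and whose bending angles stay bounded away from $0$ and $\pi$, so its lift $\tilde S \subset \matH^4$ is a quasiconvex union of totally geodesic right-angled pentagons; hence $\pi_1(S)$ is undistorted in $\pi_1(M)$ and $\tilde M = \matH^4/_{\pi_1(S)}$ is convex cocompact. It follows that $\tilde M$ is diffeomorphic to the interior of a compact 4-manifold with boundary $C$, obtained by thickening the convex core; that the surface $S$ lifts homeomorphically to an embedded surface $\tilde S \subset \tilde M$; and that the inclusion $\tilde S \hookrightarrow \tilde M$, being a $\pi_1$-isomorphism between aspherical spaces, is a homotopy equivalence $\tilde M \simeq S$.

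Second, one identifies $C$ with the disk bundle of the normal bundle $\nu$ of $\tilde S$ in $\tilde M$: a tubular neighbourhood $T$ of $\tilde S$ is such a disk bundle, and one shows, using that the convex core of $\tilde M$ lies within bounded distance of $\tilde S$, that $\tilde M \setminus \interior{T}$ is diffeomorphic to $\partial T \times [0, \infty)$, so that $\tilde M$ is diffeomorphic to $T$. Third, one computes the Euler number of $\nu$: the covering map $\tilde M \to M$ is a local isometry restricting to a diffeomorphism of $\tilde S$ onto $S$, hence it carries $\nu$ onto the normal bundle of $S$ in $M$, whose Euler number is the self-intersection $S \cdot S = 1$ obtained in Theorem \ref{odd:teo} from the Gromov -- Lawson -- Thurston formula. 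The main obstacle is the combination of the first two steps: establishing convex cocompactness of the pleated surface group, and then promoting the homotopy equivalence $\tilde M \simeq S$ to a diffeomorphism onto the total space of a rank-two bundle over $S$ --- a priori the compact core $C$ could be some other compact 4-manifold carrying the genus-3 surface group.
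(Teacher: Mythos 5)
Your first paragraph is correct and matches the paper: Corollary~\ref{cor:bundles} is indeed read off from Proposition~\ref{pi_1(S):prop}, and the non-triviality of the bundle is exactly the non-vanishing of the Euler number $S\cdot S = 1$. So the reduction is fine. The problem is the three-step sketch you offer for the proposition itself, and it has a real gap precisely at the point you flag as ``the main obstacle.''

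In step~1 you assert that because $S$ is pleated along a geodesic graph with bending angles bounded away from $0$ and $\pi$, the developed image $\tilde S\subset\matH^4$ is quasiconvex, hence $\pi_1(S)$ is convex cocompact. This implication is false in general: a pleated surface with constant right-angle bends can easily fail to embed, or embed but fail to be quasiconvex, depending on the global combinatorics of the pleating locus. Controlling exactly this is the whole point of the Gromov--Lawson--Thurston conditions, and nothing about ``angles bounded away from $0$ and $\pi$'' by itself gives you quasiconvexity. The paper does not argue this way. It instead cuts $S$ open along non-pleated edges to a pleated disc $D^2$, lifts it to $\matH^4$, and forms an explicit convex polyhedron $\mathcal D=\bigcap_s\mathcal H_s$ bounded by the hyperplanes through the sides of $D^2$. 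The heart of the proof is Claim~\ref{claim}, that non-adjacent bounding hyperplanes are disjoint; this is verified by the combinatorics of the $120$-cell tessellation (passing to the dual $600$-cell and exhibiting explicit separating hyperplanes $\alpha,\beta,\gamma$). With that in hand, Poincar\'e's Fundamental Polyhedron Theorem shows at once that the action is faithful, that $\mathcal D$ is a fundamental domain, and, being finite-sided, that $\pi_1(S)$ is geometrically finite. The diffeomorphism type and Euler number then come for free: $\mathcal D$ is a right-angled convex $20$-gon, homeomorphic to $D^2\times\matR^2$ with the disc $D^2$ embedded as the zero section, and the pairings respect the $\matR^2$-fibration, so the quotient is the rank-two bundle over $S$ with Euler number $S\cdot S=1$. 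Your steps~2 and~3, which promote a homotopy equivalence to a bundle structure and pull back the Euler number through the covering, are plausible in spirit but moot once the fundamental domain is built. In short: you correctly identify what has to be proven, but the quasiconvexity claim you lean on is not a theorem, and without the explicit Poincar\'e-domain argument (or some equally concrete substitute) the chain is broken.
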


\subsection*{Structure of the paper}
The construction of the non-spin compact hyperbolic 4-manifold $M$ is described in Section \ref{construction:section}. The proofs of two more technical lemmas are deferred to Section \ref{lemmas:section}. In Section \ref{pi:section} we show that $S$ is $\pi_1$-injective and that $\matH^4/_{\pi_1(S)}$ is geometrically finite and diffeomorphic to a plane bundle over $S$ with Euler number 1. Finally, in Section \ref{final:section} we complete the proof of Theorem \ref{main:teo} by passing from dimension 4 to any $n\geq 4$.
The reader interested only in the proof of Theorem \ref{main:teo} may skip Section \ref{pi:section}.

\subsection*{Acknowledgements}
The first author would like to thank Alan Reid, Daniel Ruberman, and Steven Tschantz for discussions on the topic. The second author was supported by the Swiss National Science Foundation (project no.~PP00P2-170560). He also thanks the Mathematics Department of the University of Pisa for the hospitality while this work was done.

\section{The construction} \label{construction:section}

Our goal is to construct a compact orientable hyperbolic 4-manifold $M$ that contains a surface $S$ with odd self-intersection. We plan to do this using right-angled polytopes, and in particular the right-angled 120-cell, that has already been employed to construct various hyperbolic manifolds and orbifolds (see for instance \cite{BM, KPV, KMT, MZ, Ma2, PP}). The right-angled 120-cell is of course a combinatorially complicated object, but thanks to its symmetries it may be used to construct hyperbolic manifolds effectively. 

We prove here Theorem \ref{odd:teo}, except for a couple of lemmas and the $\pi_1$-injectivity that are deferred respectively to Sections \ref{lemmas:section} and \ref{pi:section}.

\subsection{Pleated surfaces in right-angled 120-cell tessellations}
Our plan is to construct an oriented hyperbolic 4-manifold $M$ tessellated via a certain number of right-angled 120-cells, that contains $S$ in its 2-skeleton. Recall that the 120-cell has 120 facets that are right-angled dodecahedra. Therefore the 2-skeleton of $M$ is made of many right-angled pentagons. We want to construct $S$ as the union of some of these right-angled pentagons. 

As we said in the introduction, our desired surface $S\subset M$ cannot be totally geodesic, so it will be pleated along some of its edges and vertices, that are also edges and vertices of the tessellation of $M$. The surface $S$ will not be smooth, but it will be locally flat and easily smoothable. 

At every vertex $v$ of the tessellation of $M$, there are 16 (counted with multiplicity) 120-cells, whose link at $v$ form the standard triangulation of $S^3$ into 16 right-angled tetrahedra shown in Figure \ref{sfera:fig}. If $v$ is contained in $S$, its link $L_v$ is a closed unknotted curve contained in the 1-skeleton of this triangulation of $S^3$. Some cases are shown in Figure \ref{sfera_esempi:fig}. 

\begin{figure}
 \begin{center}
  \includegraphics[width = 6 cm]{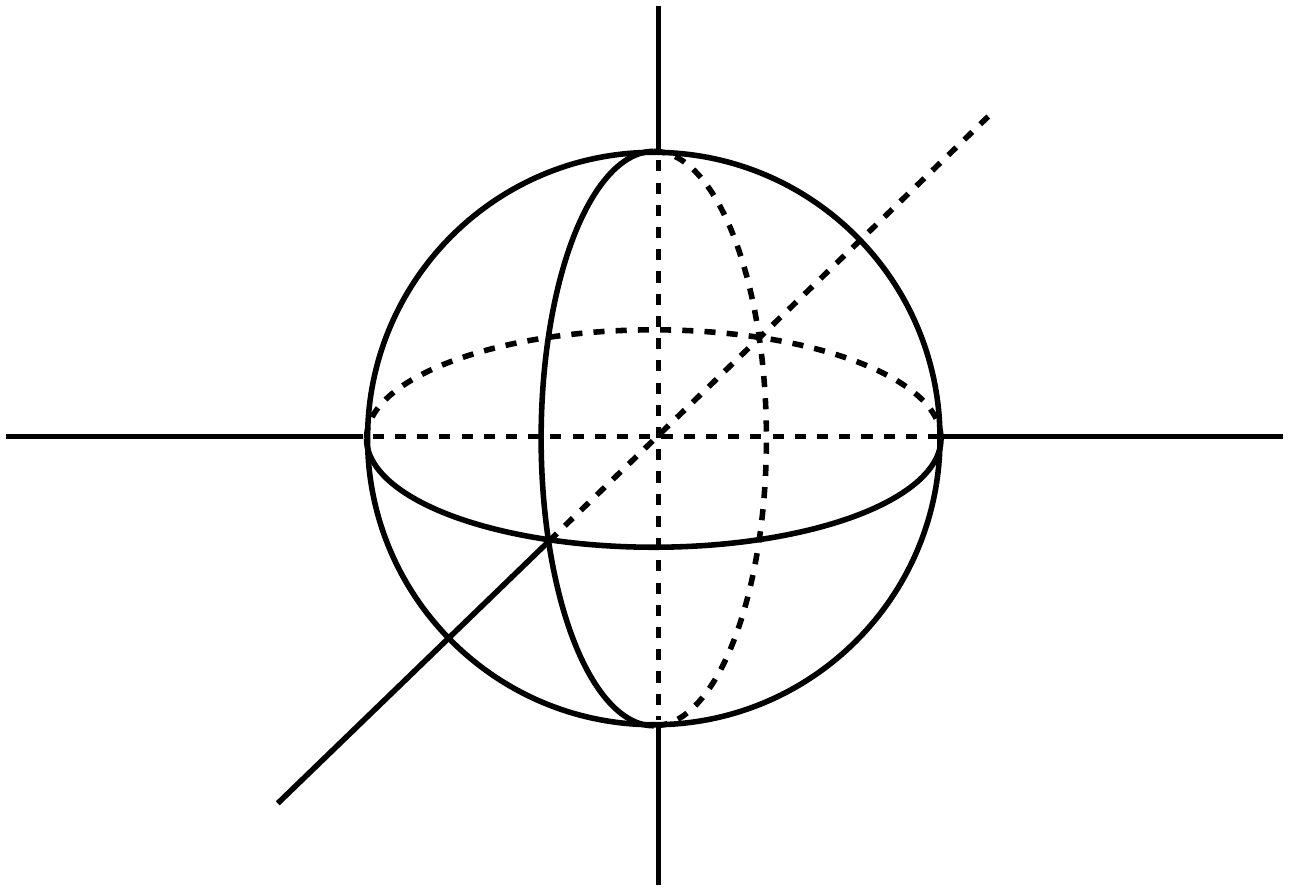}
 \end{center}
 \caption{The (stereographic projection of) the tessellation of $S^3$ into 16 right-angled tetrahedra.}
 \label{sfera:fig}
\end{figure}

\begin{figure}
 \begin{center}
  \includegraphics[width = 12.5 cm]{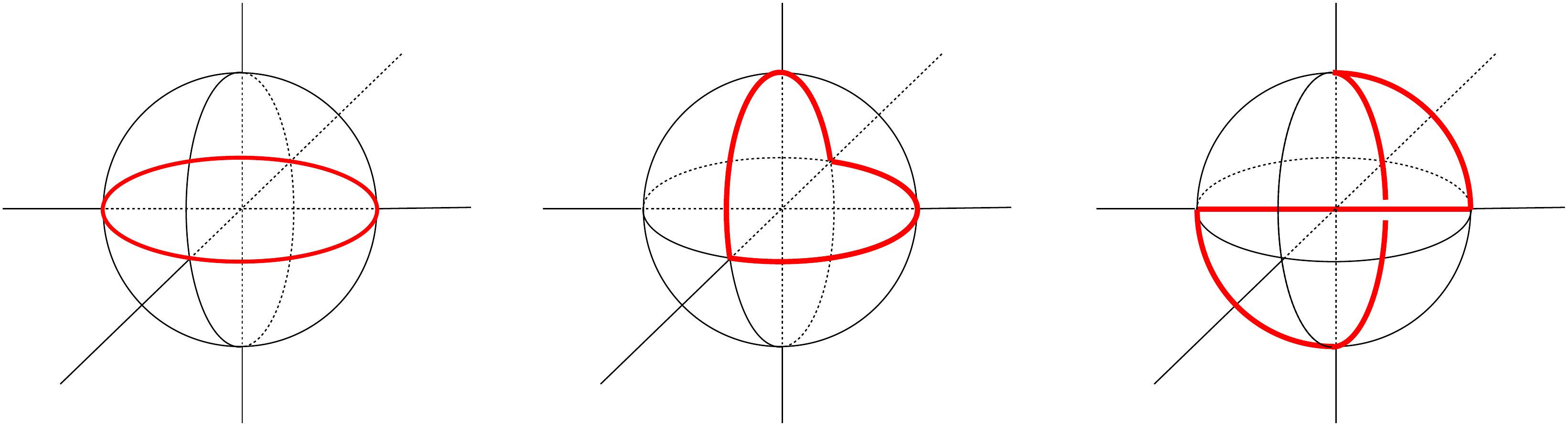}
 \end{center}
 \caption{The link $L_v$ of a vertex $v$ in the surface $S\subset M$ is a closed curve contained in the 1-skeleton of the tessellation of $S^3$ into 16 right-angled tetrahedra. Three examples (drawn in red) are shown here.}
 \label{sfera_esempi:fig}
\end{figure}

If $S$ is smooth at $v$, the link $L_v\subset S^3$ is a closed geodesic in $S^3$ as in Figure \ref{sfera_esempi:fig}-(left). If $S$ is pleated only along a geodesic arc containing $v$, the link $L_v$ is the union of two geodesic arcs as in Figure \ref{sfera_esempi:fig}-(centre). In general $L_v\subset S^3$ is a closed curve that consists of some geodesic arcs making right angles at some vertices $w_1, \ldots, w_k$ as in Figure \ref{sfera_esempi:fig}-(right). Each vertex $w_i$ points from $v$ towards an edge of the tessellation of $M$ incident to $v$ contained in $S$ where $S$ is bent (that is, it is not smooth). So the surface $S$ is bent along $k$ edges incident to $v$.

To calculate the self-intersection $S\cdot S$ of $S$, we use the beautiful simple formula of Gromov -- Lawson -- Thurston \cite{GLT}:
$$S\cdot S = \sum_v w(L_v).$$
Here $v$ runs over all the vertices of $S$, and $w(L)$ is a rational number (we call it the \emph{weight} of $L$) assigned to any closed curve $L\subset S^3$ contained in the 1-skeleton of the triangulation in Figure \ref{sfera:fig}. 

The weight $w(L)$ may be easily determined algorithmically (see \cite[Page 39] {GLT}), it is invariant under orientation-preserving isometries of the triangulation of $S^3$, while it changes by a sign under orientation-reversing ones. The three curves shown in Figure \ref{sfera_esempi:fig} have weight
$$0, \quad 0, \quad \frac 12$$
respectively.

\subsection{The Y-shaped piece}
We would like to construct a pair $(M,S)$ where $M$ is a compact oriented hyperbolic 4-manifold tessellated by right-angled $120$-cells and $S\subset M$ is a compact orientable surface contained in the 2-skeleton of $M$. We require $S$ to have two vertices with a link as in Figure \ref{sfera_esempi:fig}-(right), each contributing with weight $\frac 12$, while all other vertices contribute with zero. This will give $S\cdot S = 1$, as required.

Since it is much easier to construct surfaces inside a 3-dimensional environment than in a 4-dimensional one, we will build $S$ inside some reasonable 3-dimensional object $N$ contained in $M$. We will in fact construct a triple $S\subset N \subset M$.

What kind of reasonable 3-dimensional object $N$ can work for us? A first na\"\i ve request could be to take $N$ as an orientable 3-dimensional submanifold in $M$. This request however would be too restrictive: if $S$ is contained in an orientable 3-dimensional submanifold $N$ of $M$, its normal bundle in $M$ is trivial and hence $S\cdot S$ is zero. 

As a second try, we require $N\subset M$ to be a \emph{Y-shaped piece}, that is a kind of generalised trisection, where three orientable 3-manifolds $N_0, N_1, N_2$ are glued along a common boundary surface $\Sigma$ as in Figure \ref{Tstratum:fig}. Here is the precise definition:

\begin{defn}
Let $M$ be a smooth 4-manifold. 
A \emph{Y-shaped piece} is a subset $N\subset M$ which decomposes into three portions
$$N = N_0 \cup N_1 \cup N_2$$
as in Figure \ref{Tstratum:fig}, where:
\begin{enumerate}
\item each $N_i$ is a smooth 3-dimensional orientable submanifold with boundary;
\item the intersection $\Sigma = N_0 \cap N_1 = N_1 \cap N_2 = N_2 \cap N_0$ is a boundary component of each $N_i$.
\end{enumerate}
\end{defn}

\begin{figure}
 \begin{center}
 \labellist
\small\hair 2pt
\pinlabel $N_1$ at 10 52
\pinlabel $N_2$ at 72 52
\pinlabel $N_0$ at 48 10
\pinlabel $\Sigma$ at 41 54
\endlabellist
  \includegraphics[width = 2.5 cm]{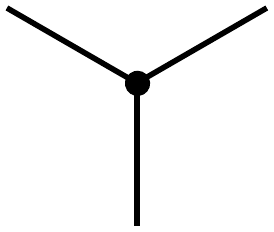}
 \end{center}
 \caption{A Y-shaped piece consists of three orientable 3-dimensional smooth submanifolds $N_0, N_1, N_2$ with boundary, intersecting in a common boundary surface $\Sigma$.}
 \label{Tstratum:fig}
\end{figure}

Note that each manifold $N_i$ is allowed to have some additional boundary components other than $\Sigma$. A $Y$-shaped piece is in some sense the simplest kind of 3-dimensional object that is not a manifold. We call $\Sigma$ the \emph{central} surface of the $Y$-shaped piece. 

Let $M$ be an oriented 4-manifold that contains a Y-shaped piece $N=N_0\cup N_1 \cup N_2$ with central surface $\Sigma$. We now describe a simple homological condition that guarantees that $M$ contains a surface $S$ with $S\cdot S = \pm 1$. Let $\Theta \subset \Sigma$ be a \emph{$\theta$-graph},
that is a $\theta$-shaped 1-complex as in Figure \ref{theta:fig} whose regular neighborhood in $\Sigma$ is a punctured torus. The graph $\Theta$ contains three oriented simple closed curves $\gamma_0, \gamma_1, \gamma_2$ with $[\gamma_0] + [\gamma_1] + [\gamma_2]=0$ in homology. 

\begin{figure}
 \begin{center}
 \labellist
\small\hair 2pt
\pinlabel $\Sigma$ at 5 5
\pinlabel $\Theta$ at 232 50
\endlabellist
  \includegraphics[width = 7 cm]{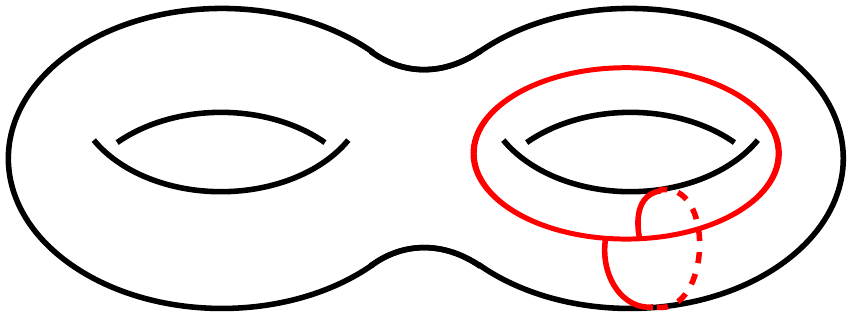}
 \end{center}
 \caption{A $\theta$-graph $\Theta$ in the central surface $\Sigma$.}
 \label{theta:fig}
\end{figure}

\begin{figure}
 \begin{center}
 \labellist
\small\hair 2pt
\pinlabel $S_2$ at 2 2
\pinlabel $S_0$ at 260 2
\pinlabel $S_1$ at 190 200
\pinlabel $\Theta$ at 180 118
\endlabellist
  \includegraphics[width = 6 cm]{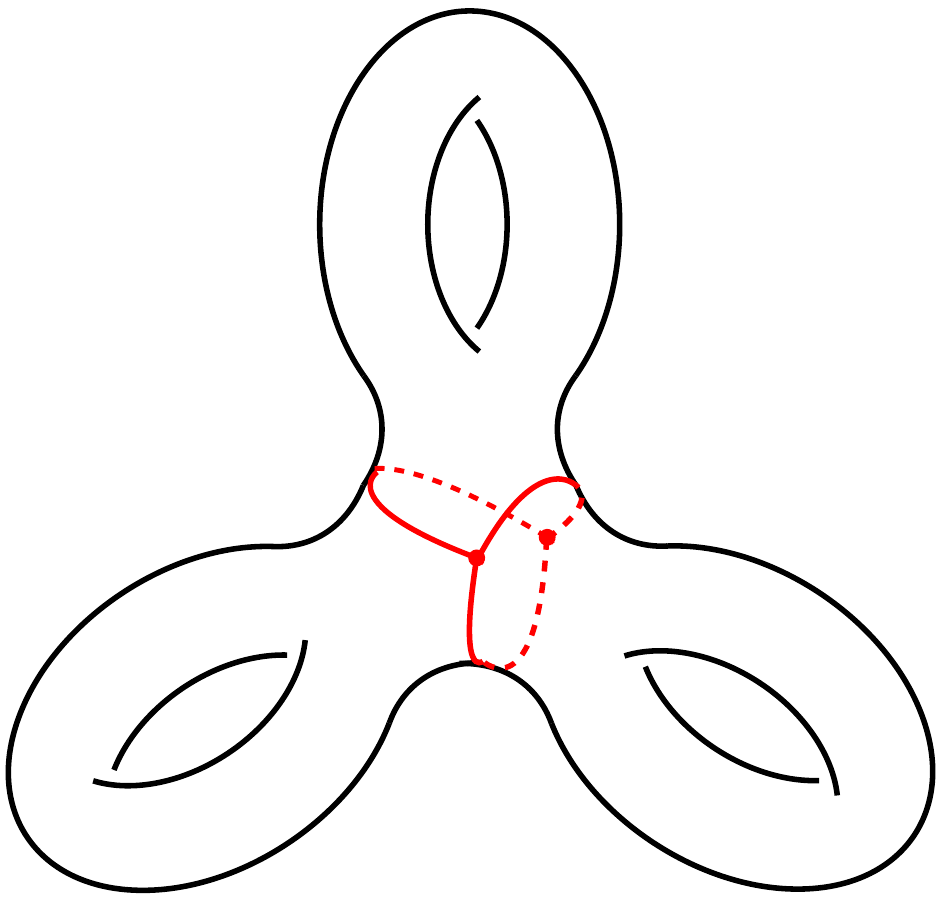}
 \end{center}
 \caption{The graph $\Theta$ in $S=S_0\cup S_1\cup S_2$.}
 \label{S:fig}
\end{figure}

\begin{figure}
 \begin{center}
 \labellist
\small\hair 2pt
\pinlabel $N_1$ at 10 52
\pinlabel $N_2$ at 72 52
\pinlabel $N_0$ at 48 10
\pinlabel $\Sigma$ at 30 38
\pinlabel $N_1'$ at 30 78
\pinlabel $N_2'$ at 89 75
\pinlabel $N_0'$ at 58 30
\pinlabel $\Sigma'$ at 55 70
\endlabellist
  \includegraphics[width = 3 cm]{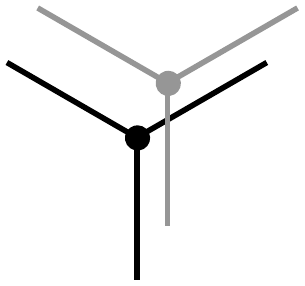}
 \end{center}
 \caption{A Y-shaped piece $N$ together with a copy $N'$ slightly pushed in a random direction. Here $N\cap N' = N_2\cap N_0'$ is a surface parallel to both $\Sigma$ and $\Sigma'$.}
 \label{Tstratum_perturbed:fig}
\end{figure}

\begin{prop} \label{SS1:prop}
Suppose that each $\gamma_i$ is the boundary of a properly embedded compact oriented surface $S_i \subset N_i$, for all $i=0,1,2$. Then $S = S_0\cup S_1 \cup S_2$ is a closed oriented surface in $M$ with $S\cdot S = \pm 1$.
\end{prop}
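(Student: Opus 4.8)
The plan is to prove the two assertions in turn: that $S=S_0\cup S_1\cup S_2$ is a closed oriented surface, and that $S\cdot S=\pm1$, the latter by pushing the whole Y--shaped piece $N$ off itself as in Figure~\ref{Tstratum_perturbed:fig}.

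For the first assertion I would name the edges of $\Theta$ as $e_0,e_1,e_2$ and orient $\gamma_k$ as the loop of $\Theta$ missing $e_k$, normalised so that $[\gamma_0]+[\gamma_1]+[\gamma_2]=0$. Since $\Theta$ is a graph, $H_1(\Theta)$ is literally its group of $1$--cycles, so the relation holds already at the chain level: $\gamma_0+\gamma_1+\gamma_2=0$. Hence along each $e_k$ the two loops running over it carry opposite orientations. From this one reads off that $S$ is topologically a surface --- along the interior of each $e_k$ exactly two sheets $S_i,S_j$ are glued, and at each of the two vertices of $\Theta$ three half--disks are glued cyclically into an honest disk --- that it is connected (any two $S_i$ share an edge), and that the opposite--orientation condition along the $e_k$ is exactly what makes the orientations of $S_0,S_1,S_2$ fit together into an orientation of $S$. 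Any closed components the $S_i$ might have lie in interiors of disjoint $N_i$'s and contribute $0$ to every intersection number below, so they may be discarded.

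For the self--intersection I would first put the picture near $\Sigma$ in a normal form: a collar of $\Sigma$ in $N_0$ together with a trivialisation of the (orientable, codimension--one) normal bundle of $N_0$ in $M$ shows $\nu_{\Sigma/M}$ is trivial, so a neighbourhood of $\Sigma$ is $\Sigma\times\matR^2$ with each $N_i$ a half--plane bundle $\Sigma\times\ell_i$; as the $N_i$ meet only along $\Sigma$, the $\ell_i$ are three rays issuing from the origin, which after an isotopy sit at angles $0,\tfrac{2\pi}{3},\tfrac{4\pi}{3}$, and $S_i=\gamma_i\times\ell_i$ near $\Sigma$. Next, choose a vector field $W$ near $N$ which near $\Sigma$ is constant equal to a generic $w\in\matR^2$ and over each $\mathrm{int}(N_i)$ is a nowhere--zero section of the normal bundle of $N_i$ (these are compatible because the $\ell_i$--component of $w$ is tangent to $N_i$). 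Then $N'=N+\epsilon W$ is a parallel copy of $N$ and $S'=S+\epsilon W\subset N'$ is the image of $S$ under a $C^\infty$--small isotopy, so $S\cdot S=\#(S\cap S')$ counted with signs. All intersections of $N$ with $N'$ lie near $\Sigma$, where they are $\Sigma\times\bigl(Y\cap(Y+\epsilon w)\bigr)$ with $Y=\ell_0\cup\ell_1\cup\ell_2$; an elementary planar fact is that a small generic translate of a tripod meets it in exactly one point, lying on $\ell_a\cap(\ell_b+\epsilon w)$ for a unique pair $a\neq b$. Hence $N\cap N'=N_a\cap N_b'$ is one surface $\Sigma^*$ parallel and diffeomorphic to $\Sigma$, and $S\cap S'=S_a\cap S_b'\subset\Sigma^*$. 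Inside $\Sigma^*$ the $1$--manifolds $S_a\cap\Sigma^*$ and $S_b'\cap\Sigma^*$ are, under $\Sigma^*\cong\Sigma$, homologous to $\gamma_a$ and $\gamma_b$, so $S\cdot S=\pm\langle[\gamma_a],[\gamma_b]\rangle_\Sigma$. Finally this number is $\pm1$: the regular neighbourhood of $\Theta$ in $\Sigma$ is a punctured torus, so its $H_1\cong\matZ^2$ carries a unimodular intersection form, and $[\gamma_a],[\gamma_b]$ are a basis of it (the $[\gamma_i]$ generate $H_1(\Theta)=\matZ^2$ and sum to zero).

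The genuinely delicate points will be the normal form near $\Sigma$ --- which is where all the orientability hypotheses enter and which needs care with collars and with how each $S_i$ sits in $N_i$ --- and checking that $W$ can be chosen so that $S'$ really computes $S\cdot S$, with no intersection points of $S\cap S'$ escaping from the neighbourhood of $\Sigma$; the tripod intersection count and the homological unimodularity are routine.
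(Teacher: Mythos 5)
Your argument is correct and follows essentially the same route as the paper: push the Y-shaped piece $N$ off itself by a generic small perturbation, observe that $N\cap N'$ is a parallel copy of $\Sigma$, and reduce $S\cdot S$ to the algebraic intersection of two of the curves $\gamma_a,\gamma_b$ in $\Sigma$, which is $\pm 1$ because any two of the $\gamma_i$ form a basis of $H_1$ of the punctured-torus neighbourhood of $\Theta$. The paper's proof is terser (it fixes the pair $\gamma_0,\gamma_2$ and appeals to the picture), whereas you make explicit the normal form near $\Sigma$, the chain-level cancellation $\gamma_0+\gamma_1+\gamma_2=0$ that glues the orientations, and the unimodularity step yielding $\pm1$; these are useful elaborations but not a different method.
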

\begin{proof}
We first note that indeed $S$ is a closed oriented surface, pleated along $\Theta$ and easily smoothable, see Figure \ref{S:fig}. To calculate $S\cdot S$, we push $N$ slightly in some random direction as in Figure \ref{Tstratum_perturbed:fig}, to produce a new $Y$-shaped piece $N'$. Note that $N\cap N' = N_2 \cap N_0'$ is a surface parallel to both $\Sigma$ and $\Sigma'$. After a slight perturbation the isotopic copy $S'\subset N'$ of $S\subset N$ intersects $S$ transversely in a single point, that corresponds to the transverse intersection of the perturbed curves $\gamma_0$ and $\gamma_2$ in $\Sigma$.
\end{proof}

The hypothesis of Proposition \ref{SS1:prop} is in fact just a homological condition on each $\gamma_i$: we require $\gamma_i$ to be zero in $H_1(N_i, \matZ)$ for all $i$. This condition guarantees the existence of the surfaces $S_i$. In our construction, every $S_i$ will be a one-holed torus and hence $S$ will have genus 3 as in Figure \ref{S:fig}.

\begin{example}
The genus-one trisection $N$ of $\matCP^2$ satisfies these hypothesis and was in fact our main inspiration. The trisection $N$ is a $Y$-shaped piece made of 3 solid tori $N_0, N_1, N_2$ in $\matCP^2$ with a common boundary torus $\Sigma$. We may choose meridian discs $S_i \subset N_i$ whose boundary curves $\gamma_i = \partial S_i$ are contained in a $\theta$-graph $\Theta \subset \Sigma$. The three meridians glue to form a sphere $S=S_0 \cup S_1 \cup S_2$ with $S\cdot S = 1$. The sphere $S$ is isotopic to a line in $\matCP^2$. 
\end{example}

Our strategy to construct the hyperbolic manifold $M$ is now the following: we first build an abstract geometric Y-shaped piece $N$ made of right-angled dodecahedra, and then we enlarge $N$ to a compact hyperbolic 4-manifold $M$ by adding right-angled 120-cells. 

\subsection{Proof of Theorem \ref{odd:teo}}
We now prove Theorem \ref{odd:teo}. The proof of two more technical lemmas and of the $\pi_1$-injectivity will be deferred to Sections \ref{lemmas:section} and \ref{pi:section}.

\begin{figure}
 \begin{center}
 \labellist
\small\hair 2pt
\pinlabel $A$ at -3 45
\pinlabel $A$ at 178 45
\pinlabel $B$ at 15 86
\pinlabel $B$ at 15 2
\pinlabel $C$ at 48 86
\pinlabel $E$ at 48 2
\pinlabel $D$ at 88 86
\pinlabel $D$ at 88 2
\pinlabel $E$ at 128 86
\pinlabel $C$ at 128 2
\pinlabel $F$ at 161 86
\pinlabel $F$ at 161 2
\pinlabel $\Theta$ at 100 50
\pinlabel $\gamma_0$ at 230 83
\pinlabel $\gamma_1$ at 230 58
\pinlabel $\gamma_2$ at 222 28
\endlabellist
  \includegraphics[width = 11 cm]{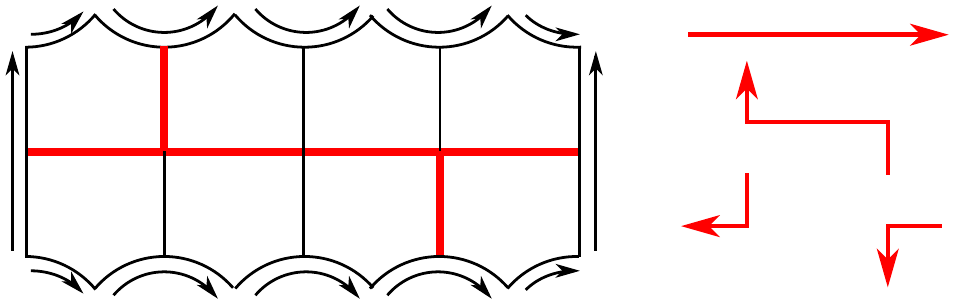}
 \end{center}
 \caption{The surface $\Sigma$ is the hyperbolic genus-two surface tessellated into 8 right-angled pentagons as shown here. The edges labeled with the same letters should be paired isometrically according to the arrows (left). The $\theta$-graph $\Theta$ contains three oriented simple closed curves $\gamma_0$, $\gamma_1$, and $\gamma_2$ (right).}
 \label{sigma:fig}
\end{figure}

Let $\Sigma$ be the genus-two oriented hyperbolic surface tessellated into 8 right-angled pentagons shown in Figure \ref{sigma:fig}-(left). The surface $\Sigma$ contains a $\theta$-graph $\Theta$, drawn in red in the figure. Let $\gamma_0, \gamma_1, \gamma_2$ be the three
oriented simple closed curves contained in $\Theta$ as shown in Figure \ref{sigma:fig}-(right). We have $[\gamma_0]+[\gamma_1]+[\gamma_2]=0$ in homology. 

The following purely 3-dimensional lemma says that $\Sigma$ is (part of) the geodesic boundary of a hyperbolic 3-manifold $N_i$ containing a one-holed torus $S_i$ with $\partial S_i = \gamma_i$, for all $i$. The manifold $N_i$ is nicely tessellated into dodecahedra.

\begin{lemma} \label{Ni:lemma}
There are three compact oriented hyperbolic 3-manifolds $N_0, N_1, N_2$ with geodesic boundary, tessellated into right-angled dodecahedra, such that:
\begin{enumerate}
\item one boundary component of $N_i$ is isometrically identified to $\Sigma$, with an isometry that preserves the tessellations into pentagons, for all $i$;
\item the boundary component $\Sigma\subset \partial N_i$ is \emph{nicely collared}, that is the 8 dodecahedra in $N_i$ incident to the 8 pentagons of $\Sigma$ are all distinct, for all $i$;
\item there is a properly embedded oriented one-holed torus $S_i \subset N_i$ with boundary $\partial S_i = \gamma_i$, for all $i$.
\end{enumerate}
\end{lemma}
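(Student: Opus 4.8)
The plan is to construct each $N_i$ explicitly as a concrete union of right-angled dodecahedra, chosen so that $\Sigma$ appears as a nicely-collared boundary component and so that a visible one-holed torus $S_i$ with $\partial S_i = \gamma_i$ sits inside. First I would take a single right-angled dodecahedron $D$ and look for a copy of the relevant configuration: the surface $\Sigma$ is tessellated into $8$ right-angled pentagons, each of which is a face of some dodecahedron in the hyperbolic manifold we are building. The natural starting point is to find, inside the right-angled $120$-cell or directly via reflections, a sub-tessellation of $8$ dodecahedra whose union contains $\Sigma$ as an embedded geodesic subsurface with the prescribed pentagon tessellation; this is a finite combinatorial search among the $120$ dodecahedral facets of the $120$-cell and their adjacencies, exploiting the large symmetry group to cut down cases.

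The second step is the nice-collaring condition (2): I must check that the $8$ dodecahedra meeting $\Sigma$ along its $8$ pentagons are pairwise distinct. This is automatic if the $8$ dodecahedra are chosen to lie on one side of a reflection hyperplane (so $\Sigma$ is a mirror), but in general one has to verify it by hand from the explicit identifications; if it fails one passes to a suitable finite cover of the piece that separates the colliding dodecahedra, which does not destroy properties (1) and (3). Then, for step (3), inside each $N_i$ I would exhibit the one-holed torus $S_i$ directly as a union of right-angled pentagons (faces of the dodecahedra) that closes up along $\gamma_i$: since $\gamma_i \subset \Theta \subset \Sigma$ is an explicit simple closed curve in the $1$-skeleton, I would track which pentagons of the tessellation it bounds and assemble them, checking that the result is an embedded orientable surface of genus $1$ with one boundary circle. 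Orientability and embeddedness are verified locally at each edge and vertex of the tessellation.

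The main obstacle I expect is step (3) together with the requirement that $N_i$ be \emph{compact} with \emph{geodesic} boundary and genuinely tessellated into dodecahedra: one needs a finite, honest hyperbolic $3$-manifold, not just an orbifold or an infinite-volume piece, so the dodecahedra must glue up consistently with all edge cycles of order giving a smooth manifold (here, since everything is right-angled, the relevant condition is that edges have valence $4$ and vertices the correct link). Finding such a finite gluing that simultaneously (a) contains $\Sigma$ as nicely-collared boundary and (b) contains the bounding torus $S_i$ is the crux; the likely resolution is an explicit construction using a small number of dodecahedra, with the remaining verification that the boundary is geodesic (immediate, since faces of right-angled dodecahedra meet $\Sigma$ orthogonally) and that $[\gamma_i]=0$ in $H_1(N_i,\matZ)$ so that a bounding surface exists — which we have arranged by choosing $S_i$ first and reading off $\gamma_i = \partial S_i$. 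I would present the three pieces $N_0, N_1, N_2$ by explicit tables of dodecahedra and face-pairings, deferring the routine but lengthy verification of the manifold conditions and of the embeddedness of $S_i$ to Section \ref{lemmas:section} as the excerpt indicates.
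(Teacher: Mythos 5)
Your proposal is a reasonable sketch of the overall shape of the argument, but it omits the key mechanism the paper uses and glosses over the genuine obstructions, so as written it does not amount to a proof.

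The central missing ingredient is a concrete way to close up a finite union of right-angled dodecahedra into a \emph{compact} hyperbolic $3$-manifold with geodesic boundary containing $\Sigma$. You acknowledge this is the crux and gesture at ``a finite combinatorial search'' and ``explicit tables of face-pairings,'' but give no reason to believe such a gluing exists, nor any organising principle for producing one. The paper's proof supplies exactly this: it works in the category of hyperbolic manifolds with right-angled corners, attaches 8 dodecahedra abstractly above the 8 pentagons of $\Sigma$, performs a small number of facet identifications to realise the bounding punctured torus, and then \emph{colours the remaining facets and mirrors iteratively} to close everything off into a genuine compact manifold with geodesic boundary. This colouring-and-mirroring step is what guarantees property (1) and compactness simultaneously, with no case-by-case edge-cycle verification; your plan has no substitute for it. Property (2) is also automatic in the paper's construction because the 8 dodecahedra are \emph{abstractly} attached one per pentagon, not found as a subcomplex of the $120$-cell tessellation; your suggestion that nice-collaring could fail and be repaired ``by passing to a finite cover'' is working in the wrong direction.

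You also miss two concrete obstructions that the paper has to handle for $\gamma_1$ (and by symmetry $\gamma_2$). First, because $\gamma_1$ is pleated, the naive facet-pairing that would create a bounding torus cannot be made simultaneously orientation-reversing on the octagonal facets and on the pleated curve, so one must first isotope $\gamma_1$ and attach extra dodecahedra. Second, after this the naive pairing produces a \emph{non-embedded} facet, which blocks the colouring step; the paper fixes this by first mirroring twice along a judicious $\{1,2\}$-colouring before performing the pairing. Neither issue is visible in your outline. Finally, your remark that $[\gamma_i]=0$ in $H_1(N_i,\mathbb{Z})$ is ``arranged by choosing $S_i$ first and reading off $\gamma_i=\partial S_i$'' has the logic reversed: the curves $\gamma_i$ are fixed in advance by the $\theta$-graph in $\Sigma$ and the homological constraint $[\gamma_0]+[\gamma_1]+[\gamma_2]=0$; the whole point of the lemma is to build $N_i$ so that the given $\gamma_i$ bounds.
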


We now construct an abstract geometric Y-shaped $N$ by glueing $N_0$, $N_1$, and $N_2$ to $\Sigma$. Starting from $N$, we may thicken it and then close it to a hyperbolic 4-manifold $M$ using 120-cells. This is how we prove the following lemma.

\begin{lemma} \label{M:lemma}
There is a compact orientable hyperbolic 4-manifold $M$ containing $N$ as a Y-shaped piece. 
\end{lemma}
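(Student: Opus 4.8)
The plan is to build $M$ in two stages: first thicken the Y-shaped piece $N$ into a compact hyperbolic $4$-manifold with boundary, using a single layer of $120$-cells, and then cap off the resulting boundary by doubling (or by a more careful gluing) so that the whole thing becomes a closed manifold still tessellated into $120$-cells, with $N$ surviving intact as a Y-shaped piece. Recall from Lemma \ref{Ni:lemma} that each $N_i$ is tessellated into right-angled dodecahedra, that the common boundary surface $\Sigma$ is nicely collared (the $8$ dodecahedra touching the $8$ pentagons of $\Sigma$ are distinct), and that $N_i$ carries the properly embedded one-holed torus $S_i$. The key geometric fact I would use throughout is that a right-angled dodecahedron is a facet of the right-angled $120$-cell, and that the $120$-cell is itself right-angled, so dodecahedra and $120$-cells glue along pentagons/dodecahedra with all dihedral angles equal to $\pi/2$; hence any pattern of such gluings that is locally correct around codimension-$2$ and codimension-$3$ faces automatically produces a smooth hyperbolic structure (the link of every cell is a spherical right-angled polytope).

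First I would describe the thickening. For each dodecahedron $D$ appearing in the tessellation of some $N_i$, attach a copy $P_D$ of the $120$-cell glued to $D$ along one of its facets; more precisely, since a dodecahedron of $N_i$ can be a facet shared by two of the $N_j$'s only along $\Sigma$, I attach $120$-cells to the two sides of $N$ in a way that is compatible along $\Sigma$. This is exactly where the \emph{nicely collared} hypothesis is needed: because the $8$ collar dodecahedra of $\Sigma$ inside $N_i$ are pairwise distinct and distinct for different $i$, there is no obstruction to choosing the attaching facets consistently, and the three sheets $N_0,N_1,N_2$ sitting inside a common collar of $\Sigma$ can be fattened to three $3$-dimensional submanifolds of the resulting $4$-manifold meeting precisely along a fattened $\Sigma$. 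One must check that around each edge and each vertex of $N$ the cyclic/spherical pattern of attached $120$-cells closes up to a manifold point; this is a finite combinatorial verification that I would organize by cases according to the local type of the face in $N$ (interior of a pentagon, edge of a pentagon, vertex of a pentagon, and the three strata where sheets meet). Call the resulting compact hyperbolic $4$-manifold-with-boundary $W$; by construction $W$ is tessellated into $120$-cells and contains $N$ as a Y-shaped piece, with each $N_i$ still a properly embedded $3$-submanifold and $\Sigma$ their common boundary.

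Next I would close up $W$. Its boundary $\partial W$ is a (possibly disconnected) closed hyperbolic $3$-manifold tessellated into right-angled dodecahedra — these are exactly the ``free'' dodecahedral facets of the outermost layer of $120$-cells that were not glued to anything. The simplest way to obtain a closed $4$-manifold is to take the orientation-reversing double $M = W \cup_{\partial W} \bar W$ along $\partial W$; this is smooth hyperbolic because all gluings are right-angled, it is compact and orientable, it is tessellated into $120$-cells, and it visibly contains $N$ (sitting inside one copy of $W$) as a Y-shaped piece, since $N$ is disjoint from $\partial W$. I expect the main obstacle to be the bookkeeping in the thickening step: verifying that $120$-cells can be attached to all dodecahedra of $N$ simultaneously in a globally consistent, orientation-compatible way so that the links of all faces of $N$ — especially the triple-sheet edges and vertices along $\Sigma$ — assemble into honest manifold links rather than orbifold points or non-manifold strata. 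Organizing this via the $16$-tetrahedron triangulation of $S^3$ from Figure \ref{sfera:fig} (the link of a vertex of a $120$-cell tessellation) and checking that the relevant sub-configurations extend to the full $S^3$ is the crux; once that is done, the doubling that closes up $W$ is routine.
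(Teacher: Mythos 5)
Your overall plan is the same as the paper's: thicken $N$ by a layer of $120$-cells, then close up. But there are two genuine gaps, one in each step.

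First, the thickening. You write that ``any pattern of such gluings that is locally correct around codimension-$2$ and codimension-$3$ faces automatically produces a smooth hyperbolic structure,'' and treat the verification as routine bookkeeping. This is not automatic, and the paper's proof hinges precisely on a non-obvious combinatorial fact about the $120$-cell. The danger is the following: when you attach a $120$-cell $C$ to a dodecahedron $D_1\subset N_0$ and another $120$-cell to a dodecahedron $D_2\subset N_1$ sharing a pentagon $F\subset\Sigma$ with $D_1$, you could end up with two $120$-cells $C_1, C_2$ adjacent to $C$ whose total interior angle along a common pentagonal face is $\tfrac{3\pi}{2}$ — and then the thickened object is \emph{not} a hyperbolic manifold with right-angled corners, so your appeal to the spherical link picture breaks down. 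The paper rules this out by checking a specific incidence property of the dodecahedral facets of the $120$-cell (a dodecahedron meeting $D_1$ but not $D_2$ is never adjacent to one meeting $D_2$ but not $D_1$), and explicitly observes that the analogous construction with hypercubes and cubes \emph{does} produce the forbidden $\tfrac{3\pi}{2}$ angle. So this is the crux of the lemma, not an afterthought; your proposal needs to state and verify this $120$-cell property (or an equivalent of it).

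Second, the closing step. You claim $\partial W$ is a closed hyperbolic $3$-manifold and propose to double along it. But $\partial W$ is not smooth: it is a $3$-manifold with right-angled corners, since distinct $120$-cells of the outer layer meet along pentagons at dihedral angle $\tfrac{\pi}{2}$. Doubling across such a boundary gives total angle $\pi$ along the codimension-$2$ corner strata, i.e.\ cone singularities, not a hyperbolic manifold. The paper instead colours the (embedded) boundary facets of the thickened piece $\bar M$ so that adjacent facets get different colours and then mirrors \emph{iteratively}, once per colour; after $k$ reflections ($2^k$ copies of $\bar M$) the total angle around each corner stratum reaches $2\pi$. Your ``orientation-reversing double'' is the special case of one colour and only works when there are no corners, which is not the situation here. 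To repair the proposal you should replace the doubling with the colouring-and-mirroring procedure, which also requires verifying that the facets of the thickened piece are embedded so that a valid colouring exists.
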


The proof of the main part of Theorem \ref{odd:teo} is now complete: by construction the manifold $M$ contains the Y-shaped piece $N$, which in turn contains three surfaces $S_0, S_1, S_2$ whose union $S$ has genus 3 and $S\cdot S = \pm 1$ by Proposition \ref{SS1:prop}. We get $S\cdot S = 1$ by choosing the appropriate orientation for $M$. By construction $M$ is arithmetic, as explained at the beginning of Section \ref{final:section}.

The two lemmas are proved in the next section. In Section \ref{pi:section} we show that $S$ is also $\pi_1$-injective, 
and this will conclude the proof of Theorem \ref{odd:teo}.

The surface $S$ is shown in Figure \ref{S_tessellated:fig}. We provide another proof of the equality $S\cdot S = \pm 1$ in Section \ref{cutting:subsection} via the Gromov -- Lawson -- Thurston formula.

\section{Proofs of the lemmas} \label{lemmas:section}
We prove here Lemmas \ref{Ni:lemma} and \ref{M:lemma}. Their proofs are similar: in both cases we construct some hyperbolic manifolds of dimension 3 or 4 by attaching right-angled dodecahedra or 120-cells to some existing object, that is the surface $\Sigma$ or the Y-shaped piece $N$. We introduce a general definition, taken from \cite{Ma2}.

\subsection{Hyperbolic manifolds with corners}
We recall from \cite{Ma2} the notion of \emph{hyperbolic manifold with (right-angled) corners}, that generalises both hyperbolic manifolds with geodesic boundary and right-angled polytopes. 

We use the Klein model $D^n$ for hyperbolic space and define $P\subset D^n$ as the intersection of $D^n$ with the positive sector $x_1, \ldots, x_n \geq 0$. 
A \emph{hyperbolic manifold with (right-angled) corners} is a topological $n$-manifold $M$ with an atlas in $P$ and transition maps that are restrictions of isometries. The boundary $\partial M$ is naturally stratified into connected closed $k$-dimensional strata called \emph{faces}, that we call vertices, edges, and facets, if $k=0,1$, and $n-1$ respectively.
Every face is abstractly itself a hyperbolic $k$-manifold with corners; note that a face may not be embedded, because it may be incident multiple times to the same lower-dimensional face. 

A manifold with corners can also be interpreted as an orbifold with mirrors, but we do not really need the more general orbifold language here: everything will be elementary.

As we said, hyperbolic manifolds with geodesic boundary and right-angled polytopes are particular kinds of hyperbolic manifolds with corners. One crucial property of this class of objects is the following: if we glue two hyperbolic manifolds with corners along two isometric embedded facets, the result is naturally a new hyperbolic manifold with corners.
More generally, if we glue two disjoint embedded isometric facets of a (possibly disconnected) hyperbolic manifold with corners, we get a new hyperbolic manifold with corners.

A nice operation that we can do with a manifold $M$ with corners is \emph{colouring and mirroring}. 
Suppose that we can colour some of the embedded facets of $M$, in such a way that adjacent coloured facets always get different colours. Then we can mirror $M$ iteratively along the facets having the same colour, and get at the end a bigger manifold with corners $M'$ containing $M$.
If we have coloured all the facets of $M$, the resulting $M'$ is without boundary. If $M$ is oriented, also $M'$ is. 
See \cite[Proposition 6]{Ma2} for more details. Using the orbifold language, we have constructed a finite orbifold covering $M' \to M$. The manifold $M'$ is tessellated into $2^k$ copies of $M$, where $k$ is the number of colours in our palette.

Note that if $M$ has $k$ facets, and these are all embedded, we can colour them with $k$ different colours: this will produce a compact hyperbolic manifold $M'$ without boundary tessellated into $2^k$ copies of $M$. 

\subsection{Proof of Lemma \ref{Ni:lemma}} \label{Ni:sec}
Up to symmetry, it suffices to consider the curves $\gamma_0$ and $\gamma_1$ shown in Figure \ref{sigma_casi:fig}, since $\gamma_2$ is isometric to $\gamma_1$. In both cases we start by attaching 8 right-angled dodecahedra above $\Sigma$, one above each pentagon. The result is a hyperbolic 3-manifold with corners, with two boundary components: its bottom is the totally geodesic $\Sigma$, while its top is isotopic to $\Sigma$ and pleated at right-angles along the pattern shown in Figure \ref{sigma_casi2:fig}. The top contains 10 octagons and 8 pentagons. 

\begin{figure}
 \begin{center}
 \labellist
\small\hair 2pt
\pinlabel $A$ at -3 45
\pinlabel $A$ at 178 45
\pinlabel $B$ at 15 86
\pinlabel $B$ at 15 2
\pinlabel $C$ at 48 86
\pinlabel $E$ at 48 2
\pinlabel $D$ at 88 86
\pinlabel $D$ at 88 2
\pinlabel $E$ at 128 86
\pinlabel $C$ at 128 2
\pinlabel $F$ at 161 86
\pinlabel $F$ at 161 2
\pinlabel $A$ at 214 45
\pinlabel $A$ at 395 45
\pinlabel $B$ at 232 86
\pinlabel $B$ at 232 2
\pinlabel $C$ at 265 86
\pinlabel $E$ at 265 2
\pinlabel $D$ at 305 86
\pinlabel $D$ at 305 2
\pinlabel $E$ at 345 86
\pinlabel $C$ at 345 2
\pinlabel $F$ at 378 86
\pinlabel $F$ at 378 2
\endlabellist
  \includegraphics[width = 12.5 cm]{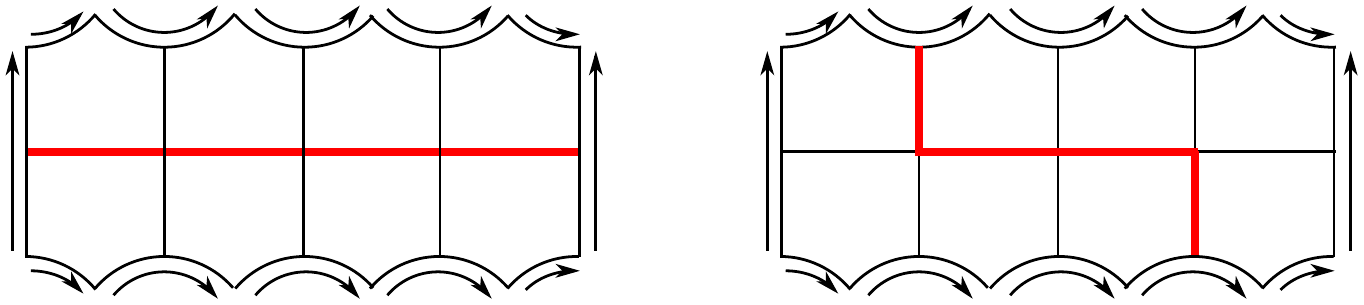}
 \end{center}
 \caption{The curves $\gamma_0$ and $\gamma_1$ in $\Sigma$.}
 \label{sigma_casi:fig}
\end{figure}

\begin{figure}
 \begin{center}
 \labellist
\small\hair 2pt
\pinlabel $A$ at -3 45
\pinlabel $A$ at 178 45
\pinlabel $B$ at 15 86
\pinlabel $B$ at 15 2
\pinlabel $C$ at 48 86
\pinlabel $E$ at 48 2
\pinlabel $D$ at 88 86
\pinlabel $D$ at 88 2
\pinlabel $E$ at 128 86
\pinlabel $C$ at 128 2
\pinlabel $F$ at 161 86
\pinlabel $F$ at 161 2
\pinlabel $A$ at 214 45
\pinlabel $A$ at 395 45
\pinlabel $B$ at 232 86
\pinlabel $B$ at 232 2
\pinlabel $C$ at 265 86
\pinlabel $E$ at 265 2
\pinlabel $D$ at 305 86
\pinlabel $D$ at 305 2
\pinlabel $E$ at 345 86
\pinlabel $C$ at 345 2
\pinlabel $F$ at 378 86
\pinlabel $F$ at 378 2
\endlabellist
  \includegraphics[width = 12.5 cm]{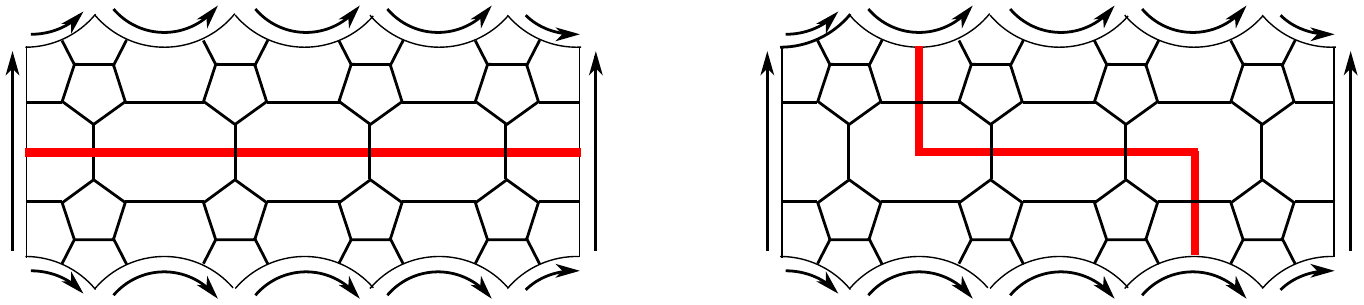}
 \end{center}
 \caption{The result of attaching 8 dodecahedra to $\Sigma$ is to produce a hyperbolic 3-manifold with corners, whose bottom is $\Sigma$, and whose top is bent at right-angles along the pattern shown here.}
 \label{sigma_casi2:fig}
\end{figure}

\begin{figure}
 \begin{center}
 \labellist
\small\hair 2pt
\pinlabel $A$ at -3 45
\pinlabel $A$ at 178 45
\pinlabel $B$ at 15 86
\pinlabel $B$ at 15 2
\pinlabel $C$ at 48 86
\pinlabel $E$ at 48 2
\pinlabel $D$ at 88 86
\pinlabel $D$ at 88 2
\pinlabel $E$ at 128 86
\pinlabel $C$ at 128 2
\pinlabel $F$ at 161 86
\pinlabel $F$ at 161 2
\pinlabel $A$ at 214 45
\pinlabel $A$ at 395 45
\pinlabel $B$ at 232 86
\pinlabel $B$ at 232 2
\pinlabel $C$ at 265 86
\pinlabel $E$ at 265 2
\pinlabel $D$ at 305 86
\pinlabel $D$ at 305 2
\pinlabel $E$ at 345 86
\pinlabel $C$ at 345 2
\pinlabel $F$ at 378 86
\pinlabel $F$ at 378 2
\endlabellist
  \includegraphics[width = 12.5 cm]{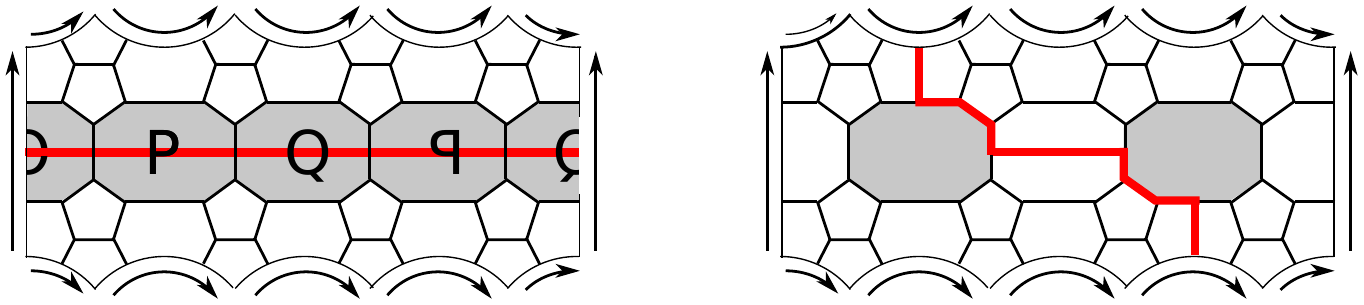}
 \end{center}
 \caption{We pair the 4 grey octagons isometrically, as indicated by the letters P and Q. The result is a new hyperbolic manifold with corners (left). We isotope the curve (right).}
 \label{sigma_casi3:fig}
\end{figure}

In the $\gamma_0$ case, we identify isometrically two pairs of top octagons as shown in Figure \ref{sigma_casi3:fig}-(left). 
We end up with an oriented manifold $\bar N_0$ with corners that contains a totally geodesic punctured torus with boundary on $\gamma_0$.
We can easily check that every face in $\bar N_0$ is embedded.
We can colour arbitrarily all its faces except $\Sigma$ (for instance, by assigning different colours to distinct facets), and then double $\bar N_0$ iteratively along its coloured facets. At the end we get an oriented manifold $N_0 \supset \bar N_0$ with totally geodesic boundary, that consists of the original $\Sigma$ and of many other copies of $\Sigma$ that will not be important for us. 

In the $\gamma_1$ case we would like to follow the same strategy but we encounter some additional technicalities because $\gamma_1$ is pleated. We cannot do a similar pairing, for the following reason: in order to build an orientable surface inside an orientable $3$-manifold, we would need the pairing maps between facets to be orientation reversing both on the facets and on the pleated red curve isotopic to $\gamma_1$ shown in Figure \ref{sigma_casi2:fig}-(right). There is no such isometry between the octagons which contain the pleating points of $\gamma_1$. In order to overcome this problem, we isotope $\gamma_1$ as shown in Figure \ref{sigma_casi3:fig}-(right). Then, we attach 4 dodecahedra above each of the two grey octagons shown in Figure \ref{sigma_casi3:fig}-(right). Let us call $\bar N'_1$ the resulting hyperbolic manifold with corners. By an accurate analysis we discover that the top of $\bar N'_1$ is as in Figure \ref{sigma_casi4:fig}.

We would like to pair the 4 grey facets as shown in Figure \ref{sigma_casi4:fig} and get as above a manifold with corners $\bar N_1$ containing a punctured torus with boundary on $\gamma_1$.
This can be done, but unfortunately a new difficulty emerges: the resulting manifold with corners $\bar N_1$ has a non-embedded facet, because all the facets in $\bar N_1$ labeled with 1 or 2 in Figure \ref{sigma_casi4:fig} glue up to a single non-embedded facet in $\bar N_1$. Non-embedded facets cannot be coloured, so we cannot conclude as we did with $N_0$.

To solve this problem we make a more complicated construction. We colour the problematic facets of $\bar N_1'$ with two colours (1 and 2) as indicated in Figure \ref{sigma_casi4:fig}. Specifically: we assign the colour 1 to two pentagons and two octagons, and the colour 2 to two pentagons and one octagon. We then mirror $\bar N'_1$ twice according to the colouring. Let us call $\bar N''_1$ the resulting manifold with corners, tessellated by four copies of $\bar N'_1$.

Every grey facet of $\bar N_1'$ labeled by either P or F is contained in a bigger facet of $\bar N_1''$. There is a unique way to pair isometrically these bigger facets of $\bar N_1''$ so that the original grey facets of $\bar N_1'$ match as we desired (this holds because the colouring was chosen to be compatible with that). If we pair them we obtain a new manifold with corners $\bar N_1$. It is not difficult to check that every facet of $\bar N_1$ is embedded. So now we conclude as we did for $\bar N_0$, that is we build $N_1$ from $\bar N_1$ by colouring everything except $\Sigma$ and mirroring.

\begin{figure}
 \begin{center}
 \labellist
\small\hair 2pt
\pinlabel $A$ at -3 45
\pinlabel $A$ at 178 45
\pinlabel $B$ at 15 86
\pinlabel $B$ at 15 2
\pinlabel $C$ at 48 86
\pinlabel $E$ at 48 2
\pinlabel $D$ at 88 86
\pinlabel $D$ at 88 2
\pinlabel $E$ at 128 86
\pinlabel $C$ at 128 2
\pinlabel $F$ at 161 86
\pinlabel $F$ at 161 2
\endlabellist
  \includegraphics[width = 11 cm]{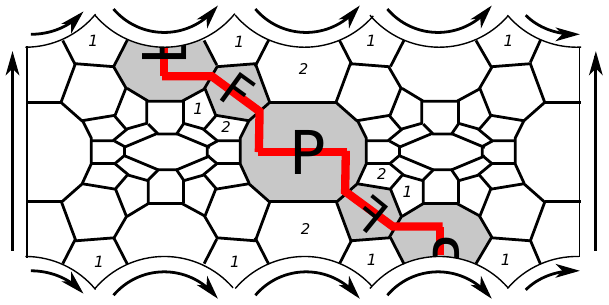}
 \end{center}
 \caption{The top of $\bar N'_1$. The pairing of the 4 grey facets as indicated by the letters P and F would produce a big non-embedded facet, because all the facets labeled with 1 or 2 glue up in the process. To avoid this, we first mirror $\bar N'_1$ twice according to the chosen $\{1,2\}$-colouring of these facets, and get $\bar N''_1$. After that, we pair the 4 new facets of $\bar N''_1$ containing the 4 grey facets to get $\bar N_1$.}
 \label{sigma_casi4:fig}
\end{figure}

\subsection{Proof of Lemma \ref{M:lemma}} \label{M:sec}
Let $N = N_0 \cup N_1 \cup N_2$ be an abstract Y-shaped piece constructed by attaching the three 3-manifolds with geodesic boundary $N_0, N_1, N_2$ to the surface $\Sigma$, via an isometry that preserves the tessellations into pentagons. 

We now construct a hyperbolic 4-manifold with corners $\bar M$ by attaching 120-cells to $N$ in a similar fashion as in \cite{Ma2}.

\begin{figure}
 \begin{center}
 \labellist
\small\hair 2pt
\pinlabel $\Sigma$ at 97 170
\pinlabel $N_1$ at 60 190
\pinlabel $N_2$ at 160 190
\pinlabel $N_0$ at 97 88
\pinlabel $\Sigma$ at 345 170
\pinlabel $\Sigma$ at 345 115
\pinlabel $N_1$ at 308 190
\pinlabel $N_2$ at 408 190
\pinlabel $N_0$ at 345 28
\endlabellist
  \includegraphics[width = 11 cm]{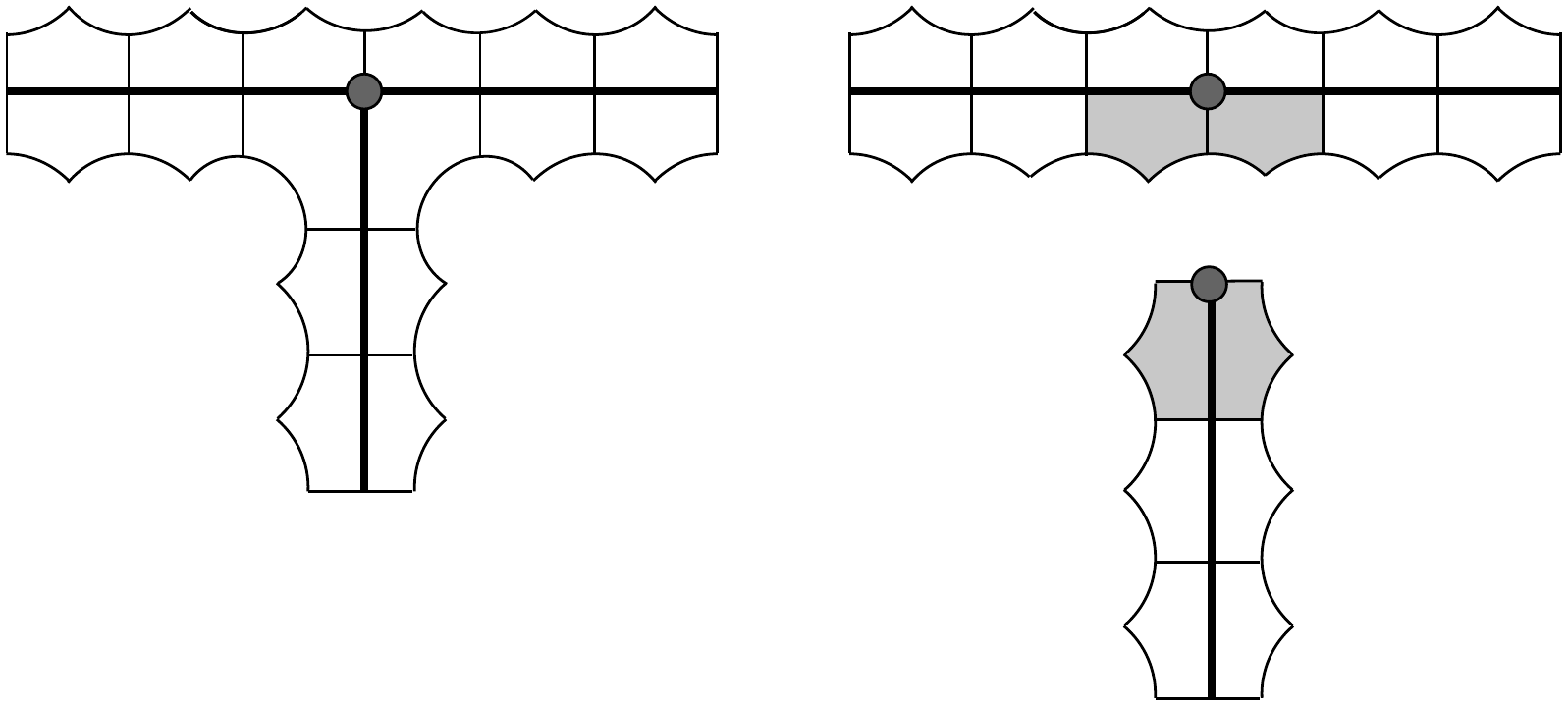}
 \end{center}
 \caption{We embed the Y-shaped piece $N$ in a hyperbolic 4-manifold with corners by forming an abstract regular neighbourhood of 120-cells. Here we draw the construction in dimension 2, with segments and pentagons instead of dodecahedra and 120-cells (left). This may be seen as a two-step procedure, where we first consider $N_1\cup N_2$ and $N_0$ separately and then we identify the grey 120-cells (right).}
 \label{Testendi:fig}
\end{figure}

We visualise $N$ geometrically as in Figure \ref{Testendi:fig}-(left): we first glue $N_1$ and $N_2$ along $\Sigma$, so that $N_1 \cup N_2$ is a hyperbolic 3-manifold with geodesic boundary containing $\Sigma$ in its interior; then we attach $N_0$ to $\Sigma$ making (in an abstract sense) an angle $\frac \pi 2$ with $N_1\cup N_2$.

Our aim is to construct an abstract ``regular neighbourhood'' of $N$ by attaching 120-cells to the dodecahedra as sketched in Figure \ref{Testendi:fig}-(left). The construction goes as in Figure \ref{Testendi:fig}-(right): we consider the hyperbolic 3-manifolds $N_1 \cup N_2$ and $N_0$ with geodesic boundary separately. These manifolds decompose into right-angled dodecahedra, so as in \cite{Ma2} we may attach two 120-cells to each dodecahedron (one ``above'' and the other ``below'') and get two hyperbolic 4-manifolds with corners that contain $N_1 \cup N_2$ and $N_0$, respectively. (We can do this unambiguously because every isometry of a dodecahedral facet extends uniquely to an isometry of the 120-cell.) 

Now we identify in pairs the 120-cells in $N_0$ incident to $\Sigma$ with the 120-cells in $N_1 \cup N_2$ that are incident to $\Sigma$ from below, as in Figure \ref{Testendi:fig}-(right). There is a natural unambiguous way to do this, as suggested by the figure. Note that since the manifolds $N_i$ are nicely collared all the 120-cells involved are indeed distinct. 

After this identification, we get a manifold with boundary $\bar M$, that may be interpreted as a regular neighbourhood of $N$, as suggested by Figure \ref{Testendi:fig}-(left). We make a crucial observation: the manifold $\bar M$ is still a hyperbolic 4-manifold with right angled corners. 

To see this, consider the tessellation of $\bar M$ into copies of the $120$-cell, and choose a pentagonal face $F$ lying in the surface $\Sigma$. Now, consider one of the $120$-cells  which contains $F$ and intersects the $3$-manifold $N_0$. In this $120$-cell $C$ there are two dodecahedra $D_1$ and $D_2$ which contain $F$. One of the two dodecahedra, say $D_1$, is contained in $N_0$, while $D_2$ is contained in either $N_1$ or $N_2$. All the other dodecahedra in $C$ are either incident to both $D_1$ and $D_2$ (there are five such dodecahedra), or they are incident to $D_1$ but not to $D_2$, or they are incident to $D_2$ and not to $D_1$, or are disjoint from both $D_1$ and $D_2$. 

Any dodecahedron $D'$ which intersects $D_1$ and not $D_2$ is not incident to any dodecahedron $D''$ which intersects $D_2$ and not $D_1$ (this can be checked with some patience by looking at the combinatorics of the 120-cell). This fact is crucial here: if this were not the case, there would be two $120$-cells $C_1$ and $C_2$ in $\bar M$, with $C_i$ adjacent to $C$ along $D_i$, $i=1,2$, with the property that the total interior angle along their common pentagonal intersection would be equal to the forbidden angle $\frac{3\pi}2$. Note that this bad configuration arises in flat geometry if we use hyper-cubes on cubes instead of 120-cells on dodecahedra. 

We have thus proved that in the boundary of $\bar M$ no pair of facets intersect with the forbidden interior angle $\frac{3\pi}2$. Therefore all interior angles in the boundary of $\bar M$ are in fact right angles and $\bar M$ is a genuine hyperbolic manifold with corners.
Finally, by colouring arbitrarily the facets of $\bar M$ and then mirroring we get a bigger compact orientable hyperbolic manifold $M$ without boundary containing $N$.

\section{The surface subgroup $\pi_1(S)$}  \label{pi:section}

In this section we prove the following:

\begin{prop}\label{pi_1(S):prop}
The surface $S$ is $\pi_1$-injective in $M$,
the group $\pi_1(S)<\mathrm{Isom} (\mathbb{H}^4)$ is geometrically finite, and $\tilde M=\mathbb{H}^4/_{\pi_1(S)}$ is diffeomorphic to the total space of the rank-two vector bundle over $S$ of Euler number one.
\end{prop}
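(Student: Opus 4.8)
The plan is to establish the three assertions --- $\pi_1$-injectivity, geometric finiteness, and the diffeomorphism type of the cover $\tilde M=\matH^4/_{\pi_1(S)}$ --- essentially simultaneously, by exhibiting a concrete convex-cocompact model for the action of $\pi_1(S)$ on $\matH^4$. The key observation is that $S$ sits inside the $Y$-shaped piece $N=N_0\cup N_1\cup N_2$, and each $S_i$ is a properly embedded totally geodesic (or at least pleated, hence CAT(0)) one-holed torus inside the hyperbolic $3$-manifold with boundary $N_i$. First I would set up the universal cover picture: lift the decomposition $S=S_0\cup S_1\cup S_2$ glued along the $\theta$-graph $\Theta$ to $\matH^4$, obtaining a $\pi_1(S)$-invariant ``tree of pieces'' where the pieces are lifts of the $S_i$ and they are glued along lifts of the three curves $\gamma_i$. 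Since each $S_i$ is incddentally quasi-isometrically embedded in its ambient $\matH^3$ slice (each $N_i$ being geometrically finite with geodesic boundary, so $\pi_1(S_i)\hookrightarrow\pi_1(N_i)$ is a convex-cocompact, hence quasiconvex, subgroup), and since the gluing curves $\gamma_i$ are each geodesics, one gets that $\pi_1(S)$ acts on $\matH^4$ with a quasiconvex (indeed cocompact-on-its-convex-hull) orbit. This gives both injectivity of $\pi_1(S)\to\pi_1(M)$ --- because the developing map restricted to $S$ is an isometric immersion whose lift to $\matH^4$ is a proper embedding on $\pi_1$-level --- and geometric finiteness of the resulting Kleinian group.

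The cleanest route to $\pi_1$-injectivity and geometric finiteness together is probably to produce an explicit $\pi_1(S)$-invariant convex subset $C\subset\matH^4$ on which $\pi_1(S)$ acts cocompactly: namely the union of the totally geodesic copies of $\tilde N_i$'s convex cores (or just of the $\tilde S_i$'s) assembled along the pattern dictated by the Bass--Serre-like structure of the $\theta$-graph and the three surfaces. Because $S$ is pleated and built from finitely many right-angled pentagons, the pleating locus $\Theta$ consists of finitely many geodesic arcs and the bending angles are controlled (they are the dihedral-type angles coming from the way $N_0$ meets $N_1\cup N_2$ at angle $\pi/2$), so Thurston's criterion for pleated surfaces --- a pleated surface with pleating locus a geodesic lamination is $\pi_1$-injective and its domain is incompressible --- applies, or alternatively one invokes the combination theorem (Maskit / Bestvina--Feighn) for the amalgam $\pi_1(S_0)*_{\matZ}\pi_1(S_1)*_{\matZ}\pi_1(S_2)$ with the three $\matZ$'s identified along $[\gamma_i]$; the hypothesis $[\gamma_0]+[\gamma_1]+[\gamma_2]=0$ is exactly what makes this glue up to a genus-$3$ surface group rather than something larger. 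Geometric finiteness then follows because each vertex/edge group is geometrically finite (a quasiconvex subgroup of the geometrically finite $\pi_1(N_i)$) and the combination theorem preserves geometric finiteness.

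For the diffeomorphism type of $\tilde M$, I would argue as follows. Since $\pi_1(S)$ is geometrically finite with $\tilde M$ having the homotopy type of $S$, the manifold $\tilde M=\matH^4/_{\pi_1(S)}$ is an aspherical open $4$-manifold homotopy equivalent to $S$; by the thick--thin decomposition and tameness (geometrically finite implies topologically tame) $\tilde M$ is the interior of a compact manifold with boundary, and it is the total space of a rank-$2$ vector bundle over $S$ --- equivalently a disc bundle's interior --- because the normal bundle of the embedded $S\subset M$ pulls back to the vertical bundle of the projection $\tilde M\to S$ (this uses that the inclusion $S\hookrightarrow\tilde M$ is a homotopy equivalence realised by a proper embedding with trivial--index tubular neighbourhood the whole of $\tilde M$ on the homotopy level, a standard consequence of asphericity plus the s-cobordism/h-cobordism theorem in the relevant category, or more elementarily: $\tilde M$ retracts to a compact core which is a regular neighbourhood of $S$, hence a disc bundle). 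The Euler number of this bundle equals the self-intersection number $S\cdot S$ computed upstairs --- which by naturality of the Euler class is the same as the self-intersection $S\cdot S=1$ in $M$ that we established via Proposition \ref{SS1:prop} and the Gromov--Lawson--Thurston formula --- so the bundle is the Euler number one rank-$2$ bundle over $S$, as claimed.

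The main obstacle I anticipate is the combination-theorem / quasiconvexity step: verifying that the amalgam of the three one-holed-torus groups along the $\gamma_i$ actually produces a discrete, faithful, geometrically finite action of the genus-$3$ surface group on $\matH^4$ rather than an indiscrete group or one with accidental parabolics/extra relations. Concretely this means checking that the three quasiconvex ``walls'' (the convex hulls of the orbits of $\tilde S_i$) meet along their common edge-wall $\tilde\gamma_i$ with the correct transversality and bounded-overlap behaviour required by Maskit's hypotheses --- the $\pi/2$ angle at which $N_0$ is attached to $N_1\cup N_2$ is what should make this work, and one must confirm that no pair of translated walls comes too close except as dictated by the tree structure. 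A second, more bookkeeping-type obstacle is identifying the homotopy type of the compact core of $\tilde M$ precisely enough to read off that it is a disc bundle (as opposed to some other thickening of $S$); here one should use that $S$ is $\pi_1$-injective and that $\tilde M$, being a $K(\pi_1 S,1)$ with free $\pi_1$, forces the core to be a regular neighbourhood of a $2$-complex homotopy equivalent to $S$, and by dimension reasons ($4=2+2$) any such regular neighbourhood of the embedded $S$ is a $D^2$-bundle, determined up to isomorphism by its Euler number.
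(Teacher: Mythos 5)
Your proposal takes a genuinely different route from the paper. The paper does \emph{not} invoke any combination theorem: it cuts $S$ open along unpleated edges into a pleated disc $D^2$, lifts it to $\matH^4$, erects orthogonal hyperplanes $H_s$ through the twenty sides of $\partial D^2$, proves via the combinatorics of the dual $600$-cell that non-adjacent $H_s$ are disjoint (Claim~\ref{claim}), and then applies Poincar\'e's Fundamental Polyhedron Theorem to the resulting right-angled convex $20$-gon $\mathcal D$. This simultaneously yields faithfulness, discreteness, geometric finiteness (finite-sided fundamental domain), and the product structure $\mathcal D\cong D^2\times\matR^2$ from which the disc-bundle description and Euler number fall out directly, with no appeal to tameness or compact-core theory.

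There are two genuine problems with your sketch. First, the step you flag as the ``main obstacle'' --- verifying the Maskit/Bestvina--Feighn hypotheses for the amalgam, i.e.\ checking that translated walls have bounded overlap and meet with the right transversality --- is exactly where the entire mathematical content of the proposition lives, and you do not resolve it. There is no soft reason this should hold: it is a concrete combinatorial fact about how the lifts of the $S_i$ sit in the $120$-cell tessellation, and the paper's $600$-cell analysis (Section~\ref{sec:claim_proof}) is precisely the non-trivial verification you would still need to perform in some guise. Without it your argument is a plan, not a proof. Second, the invocation ``Thurston's criterion for pleated surfaces --- a pleated surface with pleating locus a geodesic lamination is $\pi_1$-injective and its domain is incompressible'' is not a theorem; pleated surfaces in Thurston's sense are maps that need not be $\pi_1$-injective, and incompressibility is a hypothesis one imposes, not a conclusion one gets for free from the pleating. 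This line of reasoning would not close the $\pi_1$-injectivity gap even if the combination step were completed. A smaller imprecision: $S_0,S_1,S_2$ are glued along the common $\theta$-graph $\Theta$, not along three disjoint circles, so $\pi_1(S)$ is not literally the double amalgam $\pi_1(S_0)*_\matZ\pi_1(S_1)*_\matZ\pi_1(S_2)$; the Bass--Serre structure is that of a graph of groups over $\Theta$ and needs to be set up more carefully before any combination theorem can be quoted.
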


In particular, the $\pi_1$-injectivity of $S$ will conclude the proof of Theorem \ref{odd:teo}, and the covering $\tilde M\to M$ will prove Corollary \ref{cor:bundles}.

The strategy to prove Proposition \ref{pi_1(S):prop} is to exhibit a convex fundamental domain $\mathcal{D}$ for the action of $\pi_1(S)$ on $\mathbb{H}^4$ induced by the inclusion $S\subset M$ which, \emph{a priori}, is not necessarily faithful. The fundamental domain $\mathcal D$ will be a right-angled \emph{convex 20-gon}, as defined by Kuiper \cite{K}: this is a polyhedron with 20 cyclically consecutive facets, each isometric to the complement in $\matH^3$ of two open half-spaces with disjoint closures in $\overline \matH^3$. The domain $\mathcal D$ is tessellated into infinitely many right-angled $120$-cells. We use Poincar\'e's Fundamental Polyhedron Theorem to prove that $\mathcal D$ is indeed a fundamental domain and that the action of $\pi_1(S)$ is faithful.

Since $\mathcal{D}$ is a finite-sided polytope, the manifold $\tilde M = \matH^4/_{\pi_1(S)}$ is geometrically finite. 
Moreover, $\mathcal{D}$ is homeomorphic to the product $D^2\times \mathbb{R}^2$, and the pairing maps preserve both the boundary of the disc $D^2\times\{(0,0)\}$ and the $\mathbb{R}^2$-fibration to produce a plane bundle over the surface $S$ with Euler number $S\cdot S = 1$. 

The construction of $\mathcal D$ is not complicated: we cut $S$ into an appropriate pleated disc $D^2$, lift it to $\matH^4$, and then expand it orthogonally to a domain $\mathcal D$. The only technical problem is that we are not able to visualise $\matH^4$ and its tessellation into right-angled 120-cells, so many simple geometric sentences like ``these two hyperplanes in $\matH^4$ do not intersect'' have to be verified by analysing $S$ carefully.

\begin{rem}
%
Proposition \ref{pi_1(S):prop} shows in particular the following fact: there is a cocompact arithmetic group $\Gamma \subset {\rm Isom}(\matH^4)$ that contains a geometrically finite surface subgroup $\pi_1(S)$ such that $S$ has genus 3 and $\tilde M = \matH^4/_{\pi_1(S)}$ is a bundle over $S$ with $S\cdot S=1$. Here $\Gamma$ is the reflection group of the Coxeter simplex associated to the right-angled 120-cell, see the beginning of Section \ref{final:section}.

We note that it is possible to deduce Theorem \ref{odd:teo} directly from this fact using a separability argument, without need of the explicit construction of $M$. We thank Alan Reid for pointing this out to us.

The argument goes as follows. 
Since $\Gamma$ is GFERF \cite{BHW}, the geometrically finite subgroup $\pi_1(S)$ is separable in $\Gamma$. By \cite[Lemma 6.3]{KRS}, the closure of $\pi_1(S)$ in the profinite completion $\widehat\Gamma$ is isomorphic to the profinite completion $\widehat{\pi_1(S)}$. Moreover by \cite[Proposition 6.8]{KRS} the group $\widehat{\pi_1(S)}$ is torsion-free, and  by the arguments of \cite[Section 7.1]{KRS} one shows that there is a torsion-free orientation-preserving subgroup $\Gamma'<\Gamma$ of finite index that contains $\pi_1(S)$. By separability, one can assume that $S$ embeds in a finite index cover of the closed manifold $\matH^4/_{\Gamma'}$. 

We note however that the determination of such a $\pi_1(S)$ inside $\Gamma$ is a non-obvious task: in the case described here, we really needed the $Y$-shaped piece, or at least the portion of it which is close to $S$, to construct a surface $S$ with $S\cdot S = 1$. If one could prove that some of the Gromov -- Lawson -- Thurston examples with odd $S\cdot S$ are contained in some arithmetic lattice, then more non-spin arithmetic closed hyperbolic four-manifolds would arise.
\end{rem}

\subsection{Cutting the surface $S$} \label{cutting:subsection}

The surface $S$ lies in the two-skeleton of $M$ and is tessellated into $16$ right-angled pentagons, where $4,6,6$ of them lie in $S_0, S_1, S_2$ respectively. The tessellation is shown in Figure \ref{S_tessellated:fig}: the patient reader may check that there are indeed 16 pentagons in the figure. The 16 edges where the surface $S$ is pleated are thicker in the figure: there are 6 in the interior of $S_1$, 6 in the interior of $S_2$, and 4 in the graph $\Theta$. The one-holed torus $S_0$ is totally geodesic. One checks easily that every vertex contributes with 0 to $S\cdot S$ in the Gromov -- Lawson -- Thurston formula, except the two vertices of $\Theta$, that are drawn in white in the picture, that contribute with $\frac 12$ each. Their link is represented in Figure \ref{sfera_esempi:fig}-(right).

\begin{figure}
 \begin{center}
 \labellist
\small\hair 2pt
\pinlabel $S_2$ at 5 5
\pinlabel $S_0$ at 267 5
\pinlabel $S_1$ at 190 200
\pinlabel $\Theta$ at 175 125
\pinlabel $v$ at 100 130
\endlabellist
  \includegraphics[width = 8 cm]{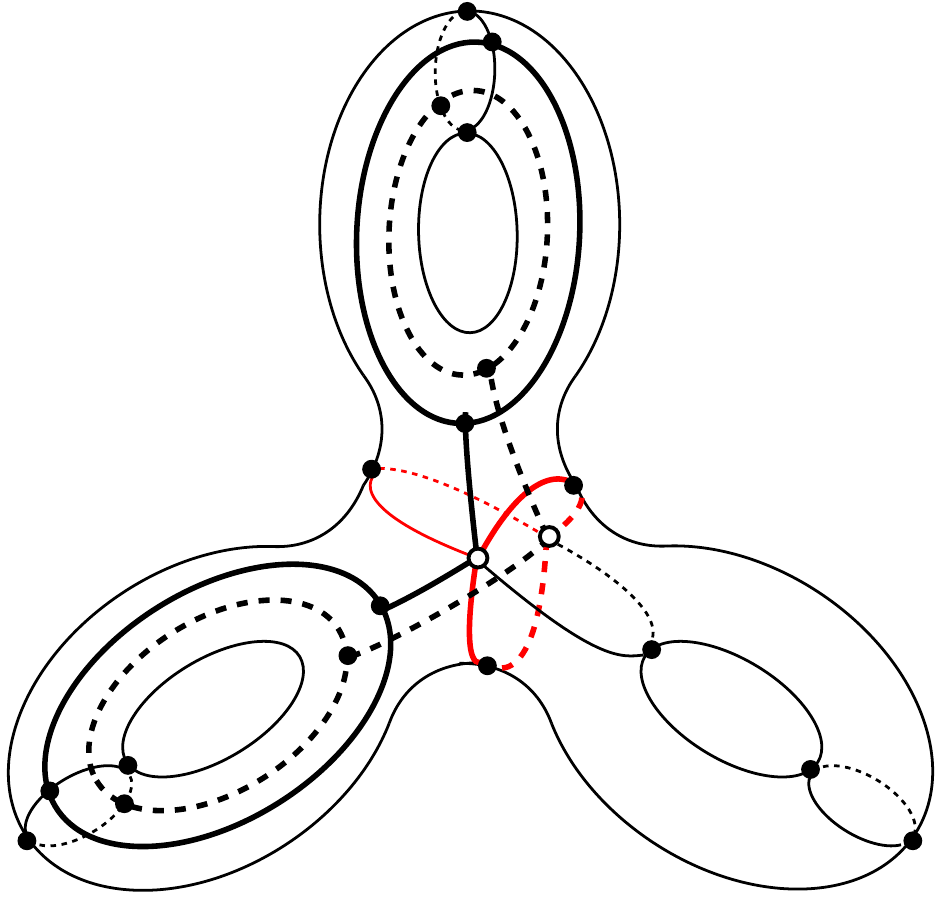}
 \end{center}
 \caption{The surface $S$ is tessellated into $16$ right-angled pentagons. It is pleated along the thick edges and smooth along the thin edges. The two vertices that contribute with $\frac 12$ to the self-intersection $S\cdot S$ are the vertices of $\Theta$ and are drawn in white.}
 \label{S_tessellated:fig}
\end{figure}

We now cut $S$ open along all the thin edges of Figure \ref{S_tessellated:fig}, except those incident either to the vertex $v$ or to one of the two white vertices. The result is a pleated disc $D^2$ as in Figure \ref{pleated_disk:fig}, tessellated into 16 pentagons and having $v$ at its centre. We lift it to a disc $D^2 \subset\mathbb{H}^4$ contained in the $2$-skeleton of the tessellation of $\mathbb{H}^4$ into copies of the $120$-cell. A crucial fact to note here is that we have obtained $D^2$ by cutting $S$ only along thin (that is, non pleated) edges of $S$.

The boundary of the disc $D^2$ is subdivided into $20$ \emph{sides} as shown in Figure \ref{S_tessellated:fig}, and each side is realised in $\mathbb{H}^4$ as a union of geodesic \emph{arcs}, with each arc corresponding to an edge of a pentagon. Notice that some of the sides are pleated, i.e.\ some of the corresponding geodesic arcs make right angles at their common endpoint. The 20 sides and the 16 pentagons in the figure are labeled with some letters $A$, $B$, $C$, $D$, $E$, $F$ and $P_0, P_1, P_2, P_3$ respectively: the reason for this marking will be explained soon.

\begin{figure}[htbp]
 \begin{center}
  \labellist
\small\hair 2pt
\pinlabel $v$ at 545 440
\endlabellist
  \includegraphics[width = 9 cm]{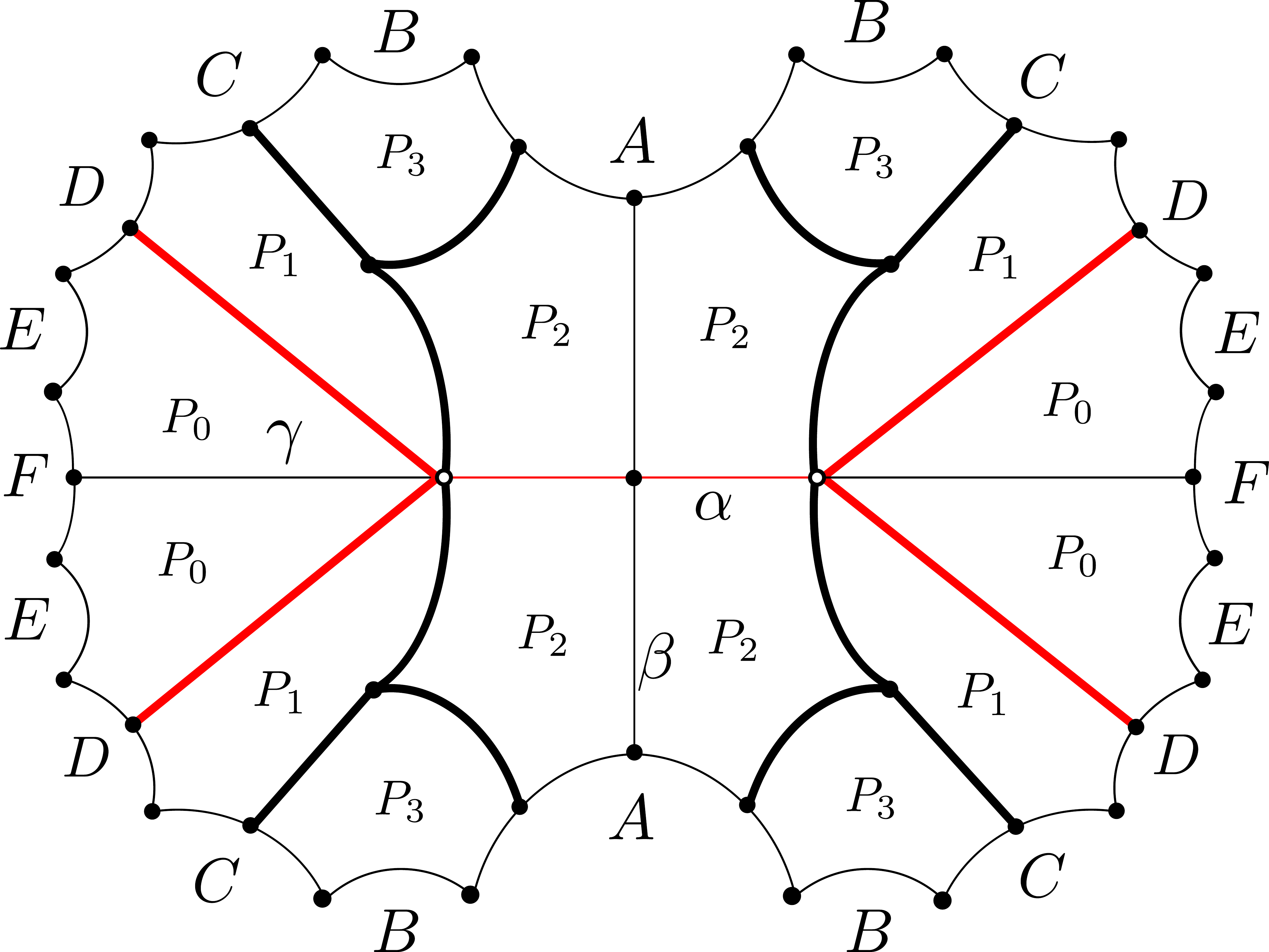}
 \end{center}
 \caption{The pleated disc $D^2$ with its tessellation into $16$ right-angled pentagons. It is pleated at right angles along the thick edges and smooth along the thin edges. The red edges correspond to the graph $\Theta$.}
 \label{pleated_disk:fig}
\end{figure}

\subsection{The fundamental domain $\mathcal D$}

Now, to each side $s$ in the boundary of $D^2$ we wish to associate a hyperplane $H_s$ in $\mathbb{H}^4$. We proceed in the following way. Consider a pentagon $P\subset D^2$ which intersects $s$ in one of its edges. There is a unique hyperplane $H_s$ in $\mathbb{H}^4$ which contains $s$ and intersects $P$ orthogonally along $s$. Notice that the pentagon $P$ is not uniquely determined, as some sides of $D^2$ intersect more than one pentagon. However, the resulting hyperplane $H_s$ does not depend on the choice of $P$.

Consider a hyperplane $H_s$ constructed as above. Its intersection with the pentagon $P$ is a geodesic arc, with $P$ lying in one of the two halfspaces determined by $H_s$. Let us call $\mathcal{H}_s$ such halfspace. We define the set $\mathcal{D}\subset \mathbb{H}^4$ as the intersection of the halfspaces of the form $\mathcal{H}_s$, where $s$ varies over the sides of the disc $D^2$:
$$\mathcal{D}=\bigcap_{s}\mathcal{H}_s.$$

Consider now two hyperplanes $H_s$ and $H_{s'}$, corresponding to adjacent sides $s, s'$ in the boundary of $D^2$. Clearly these two hyperplanes intersect along a hyperbolic plane, that contains the common vertex of $s$ and $s'$ and is orthogonal to the adjacent pentagon in $D^2$. We claim that these are the \emph{only} intersections between the hyperplanes $H_s$:

\begin{claim} \label{claim}
Suppose that $s$ and $s'$ are non-adjacent sides of the disc $D^2$. Then the corresponding hyperplanes $H_s$ and $H_{s'}$ do not intersect in $\mathbb{H}^4$. 
\end{claim}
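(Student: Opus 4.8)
The plan is to verify Claim~\ref{claim} by a finite combinatorial analysis of how the disc $D^2$ sits inside the tessellation of $\matH^4$ into $120$-cells, localising everything around the single central vertex $v$. The key geometric observation is that each hyperplane $H_s$ is the ``orthogonal span'' of the side $s$ inside the $2$-skeleton: because the pentagons of the tessellation meet at right angles and every side lies in the boundary of $D^2$, the hyperplane $H_s$ is in fact the totally geodesic hull of the edges making up $s$, together with their perpendiculars into the adjacent $120$-cells. Since $D^2$ was cut out of $S$ only along \emph{thin} (non-pleated) edges, the disc $D^2$ lifts isometrically, and each $H_s$ is a genuine hyperbolic $3$-plane in $\matH^4$. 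I would first record, for each of the $20$ sides $s$, which $120$-cells of the tessellation contain the arcs of $s$ and which hyperplane of that $120$-cell contains $H_s$; this is a bookkeeping step driven directly by Figure~\ref{pleated_disk:fig} and the labelling $A,B,\dots,F$ and $P_0,\dots,P_3$.

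The core of the argument is then a disjointness check. First I would dispose of the case where $H_s$ and $H_{s'}$ meet a common $120$-cell: inside a single right-angled $120$-cell, two of its dodecahedral hyperplanes either coincide, are disjoint in $\overline{\matH^4}$, or meet at a right angle along a common $2$-face, and a short look at the combinatorics of the $120$-cell tells us exactly which pairs do which; one checks that non-adjacent sides $s,s'$ never land on a pair of dodecahedra that share a $2$-face. The remaining, and genuinely global, case is when $H_s$ and $H_{s'}$ pass through $120$-cells that are themselves far apart in the tessellation: here I would use the fact that $\matH^4$ is $\mathrm{CAT}(-1)$ together with convexity — two totally geodesic $3$-planes that are each ``supported'' by a locally convex wall of the $120$-cell tessellation cannot cross unless their supporting walls already cross, and one propagates this along the chain of $120$-cells between them. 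Equivalently, one exhibits a separating hyperplane of the tessellation between $H_s$ and $H_{s'}$ whenever the two sides are non-adjacent; the labelling of the sides by letters $A,\dots,F$ is precisely designed so that sides sharing a letter are the mirror-paired ones and the separating walls can be read off the picture.

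The main obstacle, as the authors themselves flag just before the claim, is that we cannot directly visualise $\matH^4$ or its $120$-cell tessellation, so every sentence of the form ``these two $3$-planes do not intersect'' must be reduced to a statement about the $2$-complex $D^2$ and the link $S^3$-triangulation of Figure~\ref{sfera:fig}. Concretely, the hard part is organising the roughly $\binom{20}{2}$ pairs of sides into a manageable number of orbits under the symmetries coming from the mirroring construction of Section~\ref{M:sec}, so that only a handful of genuinely distinct configurations need to be inspected. I would exploit the colouring-and-mirroring structure heavily: each $H_s$ is a wall of the mirrored manifold $M$ pulled back to $\matH^4$, two such walls are disjoint in $\matH^4$ iff the corresponding embedded facets of the relevant $\bar M$-type piece are disjoint (a finite, checkable condition), and adjacency of sides in $\partial D^2$ corresponds exactly to the facets sharing a codimension-$2$ face. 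Once this dictionary is set up, the claim becomes a finite verification, and the argument closes.
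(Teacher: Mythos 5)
Your proposal has the right broad shape --- localise to the $120$-cell tessellation, split into pairs that do and do not bound a common cell, and invoke separating walls --- but it is missing the key reduction that makes the check finite, and the global step you sketch would not hold up. The paper's crucial tool (Section~\ref{sec:tessellation}) is that the tessellation is the orbit of one $120$-cell $C$ under its reflection group, so every hyperplane of the tessellation carries a well-defined label by a facet of $C$, and two such hyperplanes can bound a common $120$-cell (and hence can intersect) only if their labels are \emph{adjacent} facets of $C$. Passing to the dual $600$-cell, the 20 sides of $D^2$ collapse onto 6 labelled vertices $A,\dots,F$, and most of the $\binom{20}{2}$ pairs are ruled out at a glance. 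You gesture at recording ``which hyperplane of that $120$-cell contains $H_s$'', but you never set up the labelling that lets you compare hyperplanes of \emph{different} $120$-cells; and this matters, because whether $H_s$ and $H_{s'}$ ever become walls of a common cell, possibly far from $D^2$, is itself a global question, so your ``first dispose of the local case'' plan presupposes the very information you would need the labels to supply.

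The step that would fail outright is your treatment of the remaining, global, case. If two hyperplanes of this tessellation intersect at all, they are automatically both walls of some common $120$-cell at the intersection, so there is no residual ``faraway crossing'' to be excluded by $\mathrm{CAT}(-1)$/convexity propagation; convexity of the individual cells gives no information about whether two walls become adjacent far away. The proposed dictionary ``two such walls are disjoint in $\matH^4$ iff the corresponding embedded facets of the relevant $\bar M$-type piece are disjoint'' is false, since $H_s$ and $H_{s'}$ extend across all of $\matH^4$ and may cross outside the preimage of any fixed compact piece. And the letters $A,\dots,F$ are not side-pairing data --- in the fundamental-domain construction $A$ pairs with $C$ --- they are the facet labels, which is precisely what your argument is missing. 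The correct instantiation of your separating-wall idea is the paper's: after the label reduction only a few dangerous pairs with \emph{adjacent} labels survive, and each is shown disjoint by exhibiting one of three explicit hyperplanes $\alpha$, $\beta$, $\gamma$, orthogonal to $D^2$ along interior non-pleated edges, that separates $H_s$ from $H_{s'}$. Your proposal neither performs the label reduction nor identifies the separating hyperplanes, so as written it does not close the argument.
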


By construction, the hyperplanes $H_s$ are hyperplanes in the tessellation of $\mathbb{H}^4$ into copies of the $120$-cell. Before proving Claim \ref{claim}, we take a closer look at the combinatorial properties of this tessellation.

\subsection{The $120$-cell tessellation of $\mathbb{H}^4$ is naturally coloured} \label{sec:tessellation}
Consider a right-angled hyperbolic $120$-cell $C \subset \mathbb{H}^4$. By reflecting it along its facets we produce a tessellation of $\mathbb{H}^4$ into copies of $C$. Now, consider a $k$-dimensional face $F$ of this tessellation and let $H$ be the $k$-dimensional subspace in $\mathbb{H}^4$ which contains it. The face $F$ is obtained by applying a number of reflections to some \emph{unique} $k$-dimensional face $F_0$ of $C$, and any other face $F'\subset H$ of the tessellation is obtained in the same way from the same face $F_0$ of $C$. Therefore, it makes sense to label the whole subspace $H$ with the face $F_0$ of $C$. We have therefore defined a ``labeling'' function from the set of $k$-dimensional subspaces of the tessellation to the set of $k$-dimensional faces of $C$. 

Now, consider two hyperplanes $H_1$ and $H_2$ of the tessellation. A necessary condition for $H_1$ and $H_2$ to intersect is that their labels should correspond to a pair of adjacent facets of the $120$-cell $C$. Conversely, if their labels correspond to non-adjacent facets of $C$, the hyperplanes cannot intersect. The intersection patterns for the facets of $C$ can be visualised much more easily by considering the dual polytope to $C$, the $600$-cell $C^*$.
This polytope has $600$ tetrahedral facets, $1200$ triangular faces, $720$ edges and $120$ vertices, and its boundary is a simplicial complex homeomorphic to the $3$-sphere. The correspondences between the strata of the two polytopes is as follows:
\begin{itemize}
\item \{Dodecahedra of $C$\} $\longleftrightarrow$ \{Vertices of $C^*$\}
\item \{Pentagons of $C$\} $\longleftrightarrow$ \{Edges of $C^*$\}
\item \{Edges of $C$\} $\longleftrightarrow$ \{Triangles of $C^*$\}
\item \{Vertices of $C$\} $\longleftrightarrow$ \{Tetrahedra of $C^*$\}
\end{itemize}
Clearly, two dodecahedral facets of $C$ intersect if and only if the corresponding vertices of $C^*$ are joined by an edge. Now, consider a pentagon $P$ in $C$, corresponding to an edge $e$ of $C^*$. There are $5$ distinct tetrahedra in $C^*$ which have $e$ as an edge, as shown in Figure \ref{pentagon:fig}. There are exactly $5$ vertices in these tetrahedra which are not vertices of $e$, and these correspond to the dodecahedral facets of $C$ which intersect $P$ orthogonally in an edge.

\begin{figure} 
 \begin{center}
  \includegraphics[width = 5 cm]{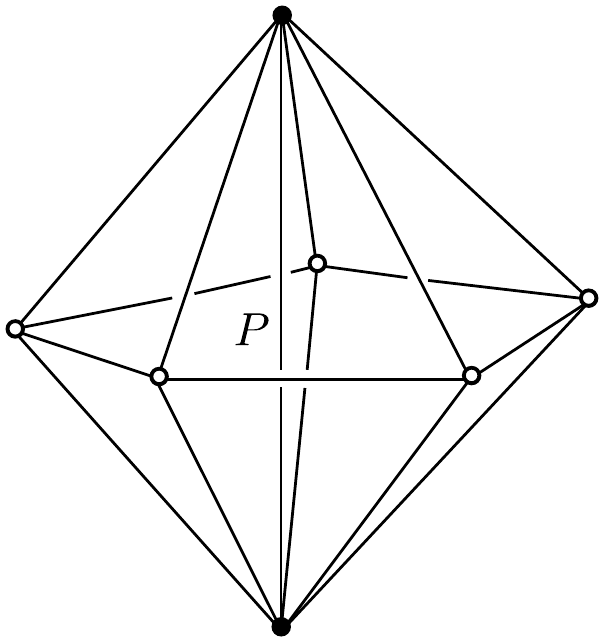}
 \end{center}
 \caption{The five simplices adjacent to an edge of the $600$-cell $C^*$. If the central edge corresponds to a pentagon $P$ of the $120$-cell $C$, the five white vertices correspond to the dodecahedral facets of $C$ which intersect $P$ in one of its edges.}
 \label{pentagon:fig}
\end{figure}

\subsection{Proof of Claim \ref{claim}} \label{sec:claim_proof}
Consider the pleated disc $D^2$ of Figure \ref{pleated_disk:fig}. By the discussion above, every pentagon of $D^2$ is labeled by some pentagon of $C$, and every side $s$ of $D^2$ is labeled by the facet of $C$ that is assigned to the corresponding hyperplane $H_s$.

With some patience one discovers that the pentagons are marked with only $4$ distinct labels $P_0, P_1, P_2,$ and $P_3$, as shown in Figure \ref{pleated_disk:fig}. The $4$ pentagons with label $P_0$ are those lying in the totally geodesic one-holed torus $S_0$. The remaining pentagons have labels $P_1,P_2$ and $P_3$, and those in the upper (resp.\ lower) half of the picture lie in $S_1$ (resp.\ $S_2$). A careful analysis shows that the sides of $D^2$ are marked with $6$ different labels $A,B,C,D,E,F$ as shown in Figure \ref{pleated_disk:fig}. By dualising the $120$-cell, we associate to the $4$ pentagons $4$ distinct edges and to the boundary hyperplanes $6$ distinct vertices of the $600$-cell as in Figure \ref{grafo_600cella:fig}. 
\begin{figure}[htbp]
 \begin{center}
  \includegraphics[width = 7.5 cm]{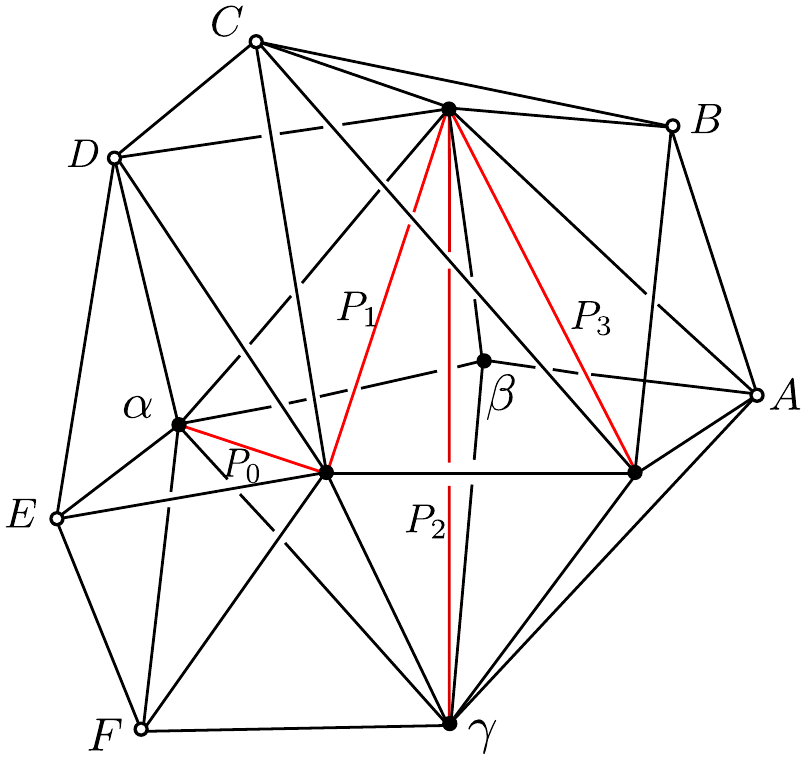}
 \end{center}
 \caption{Labels for the pentagons and sides of $D^2$, seen dually in the $600$-cell $C^*$. The pentagons are drawn as red edges, while the hyperplanes corresponding to the sides are drawn as white vertices. The figure shows only the portion of 600-cell that is of interest for us.}
 \label{grafo_600cella:fig}
\end{figure}

Note that there is an edge connecting the vertex with label $A$ to the vertex with label $B$, as one would expect by noticing that there are adjacent sides of $D^2$ with labels $A$ and $B$. More importantly, we point out that there are \emph{no extra edges} in the $600$-cell connecting any pair of the vertices $A$, $B$, $C$, $D$, $E$, and $F$ apart from those shown in the figure. This means that we can, for instance, prove that $H_s \cap H_{s'} = \emptyset$ if the two edges $s,s'$ have labels $A$ and $C$, or $A$ and $D$, and so on.

This excludes many possible unwanted intersections, but not all. For example, a hyperplane $H_s$ with label $B$ associated to a side $s$ of $D^2$ could intersect a hyperplane $H_{s'}$ with label $C$ associated to a side $s'$ of $D^2$, with $s$ not adjacent to $s'$. In order to exclude this type of intersection we proceed as follows. 
Consider the internal edges of $D^2$ with labels $\alpha$, $\beta$ and $\gamma$ as shown in Figure \ref{pleated_disk:fig}. Note that these edges are not pleated, therefore there are $3$ hyperplanes in $\mathbb{H}^4$, each containing one these edges and orthogonal to the disc $D^2$. By a slight abuse of notation, we label these $3$ hyperplanes by $\alpha$, $\beta$ and $\gamma$ respectively. They correspond to $3$ vertices of the $600$-cell, as shown in Figure \ref{grafo_600cella:fig}. Each of these hyperplanes separates $\mathbb{H}^4$ into two halfspaces. Now, for every possible unwanted intersection between hyperplanes $H_s$ and $H_{s'}$ with adjacent labels (but with non-adjacent sides $s$ and $s'$), we can always find at least one hyperplane with label $\alpha$, $\beta$ or $\gamma$ that separates $H_s$ and $H_{s'}$, i.\ e.\ such that $H_s$ and $H_{s'}$ lie in opposite halfspaces with respect to the chosen hyperplane. Therefore $H_s$ and $H_{s'}$ turn out to be disjoint. 

For example, consider  in Figure \ref{pleated_disk:fig} the upper left side with label $B$ and the lower right side with label $C$. The two corresponding hyperplanes
are separated by any of the
three hyperplanes $\alpha$, $\beta$ and $\gamma$. Similarly, consider the two hyperplanes with label $F$. They
are separated by
the hyperplane with label $\beta$. By repeating this reasoning for all possible pairs of non-adjacent sides with adjacent labels, we conclude that there are no unwanted intersections between the hyperplanes $H_s$, and Claim \ref{claim} is proven.

\subsection{Conclusion of the proof of Proposition \ref{pi_1(S):prop}}

First, we notice that the interior of the pleated disc $D^2$ is entirely contained in the interior of the domain $\mathcal{D}$. This follows from the fact that none of the internal edges of the tessellation of $D^2$ into pentagons is contained in a bounding hyperplane $H_s$ of $\mathcal{D}$, and therefore $D^2$ cannot intersect the bounding hyperplanes of $\mathcal{D}$ in its interior. This can be verified by noticing that none of the triangular faces of the $600$-cell corresponding to the internal edges of $D^2$ has vertices with label $A,B,C,D,E$ or $F$.

Following Kuiper's terminology \cite[Section 3.1]{K}, the polyhedron $\mathcal D$ is a right-angled 4-dimensional \emph{convex 20-gon}. It has 20 cyclically consecutive facets; each facet contains one side of $D^2$ and is isometric to $\matH^3$ minus two open half-spaces with disjoint closures in $\overline{\matH}^3$. Two consecutive facets are incident at a right angle along a copy of $\matH^2$. This is a consequence of Claim \ref{claim}. 

We now split each side labeled with $A$ in Figure \ref{pleated_disk:fig} into two sides (that we still label with the letter $A$), by cutting it at its middle point. The number of sides of $D^2$ grows from 20 to 22. We also split the corresponding facets of $\mathcal D$ into two facets (along the plane orthogonal to the middle point of the original $A$), that we now think as meeting with a dihedral angle $\pi$. Now the domain $\mathcal D$ is a 22-gon, with consecutive facets meeting either at $\frac \pi 2$ or $\pi$ angle.

By construction $\pi_1(S)$ acts isometrically on $\matH^4$ and by examining Figures \ref{S_tessellated:fig} and \ref{pleated_disk:fig} we check that the action is generated by some pairing on the 22 sides of $D^2$ that give rise to $S$. Every side with label $B,D,E$, or $F$ is paired to a side with the same letter, while the 4 sides labeled by $C$ are paired with the 4 sides labeled by $A$.

Since all the sides in $\partial D^2$ are made of thin (that is, non pleated) edges, the isometry in $\pi_1(S)$ that sends a side $s$ to some side $s'$ also sends isometrically the hyperplane $H_s$ to $H_{s'}$. Therefore it pairs isometrically the corresponding facets of $\mathcal D$. It is crucial here that $\partial D^2$ is made of thin edges.

Summing up, the action of $\pi_1(S)$ on $\matH^4$ is generated by some face pairings of $\mathcal{D}$. 
By Poincar\'e's Fundamental Polyhedron Theorem, the action is faithful and $\mathcal{D}$ is a fundamental domain, see \cite[Theorem 4.14]{EP}.  
Moreover, since $\mathcal D$ is finite-sided, the Kleinian group $\pi_1(S)$ is geometrically finite.

Finally, being a convex $22$-gon, the domain $\mathcal{D}$ is homeomorphic to $D^2\times\mathbb{R}^2$, with $D^2$ itself embedded as $D^2\times\{(0,0)\}$. 
The $\mathbb{R}^2$-fibration can be adjusted to be preserved by the pairing maps and everything can be smoothened, so the quotient 
$\tilde M=\mathbb{H}^4/_{\pi_1(S)}$ is diffeomorphic to a rank-$2$ real vector bundle over $S$ with Euler number $S\cdot S = 1$.

\section{Proof of Theorem \ref{main:teo}} \label{final:section}
We now prove Theorem \ref{main:teo}.
We have built in Theorem \ref{odd:teo} a non-spin hyperbolic $4$-manifold $M$ which is tessellated into copies of the right-angled $120$-cell $C$. Since $C$ is a regular polytope, the manifold $M$ is an orbifold covering of the characteristic simplex $\Delta\cong C/_{\mathrm{Isom}(C)}$ of $C$.

Let $\Gamma<\mathrm{Isom}(\mathbb H^4)$ be the Coxeter group generated by reflections in the facets of $\Delta$. By \cite{Bug} (see also \cite{VS}), $\Gamma$ is arithmetic of simplest type, and the associated admissible quadratic form $\mathrm{f}$ of signature $(4,1)$ is defined over the field $k=\mathbb{Q}[\sqrt{5}]$. More specifically, $\Gamma$ is a subgroup of the group $\mathrm{O}(\mathrm{f},R_k)$, where $R_k$ is the ring of integers of the field $k$.

We will apply the following result from \cite{KRS}:

\begin{lemma}\label{lemma:embedding}
Let $M^n$ be an orientable, arithmetic hyperbolic $n$-manifold of simplest type, with associated quadratic form $\mathrm{f}$ defined over a field $k$. Suppose that the group $\pi_1(M^n)<\Or(f)$ is contained in the subgroup of $k$-points $\mathrm{O}(\mathrm{f},k)$. Then $M^n$ geodesically embeds in an orientable hyperbolic $(n+1)$-manifold $M^{n+1}$ which is itself arithmetic of simplest type, with associated form $\mathrm{g}$ defined over the same field $k$. Moreover $\pi_1(M^{n+1})<\mathrm{O}(\mathrm{g},k)$. If $M^n$ is compact and defined over a proper extension of $\mathbb{Q}$, so is $M^{n+1}$.
\end{lemma}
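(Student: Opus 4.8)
This statement is \cite{KRS}; I sketch the strategy. The plan is to add one coordinate in the obvious way and then to ``close up'' the resulting infinite-volume object by a separability argument. For the first step, set $\mathrm g = \mathrm f \perp \langle a\rangle$ with $a\in k$ totally positive (one may take $a=1$). Then $\mathrm g$ is admissible of signature $(n+1,1)$ over the same field $k$; the pointwise stabiliser in $\Or(\mathrm g)$ of the vector $v$ with $\mathrm g(v)=a$ is $\Or(\mathrm f)$, and geometrically this realises a totally geodesic copy of $\matH^n$ inside $\matH^{n+1}$, invariant under $\Gamma := \pi_1(M^n) < \Or(\mathrm f,k) < \Or(\mathrm g,k)$. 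If $\mathrm f$ is anisotropic over $k$ and $k\neq\matQ$, then $\mathrm g$ is still anisotropic over $k$: a nontrivial zero $(x,t)$ of $\mathrm g$ with $t\neq 0$ would force $\mathrm f(x/t)=-a$, which is impossible because $\mathrm f$ is positive definite at every real place of $k$ different from the identity whereas $-a$ is negative there.

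Next I would place $\Gamma$ inside a genuine arithmetic lattice of $\Or(\mathrm g,\matR)$. Since $M^n$ is arithmetic of simplest type with form $\mathrm f$ over $k$ and, crucially, $\Gamma < \Or(\mathrm f,k)$, the group $\Gamma$ is commensurable with $\Or(\mathrm f,R_k)$ in the strict (non-conjugating) sense; writing $\Gamma$ as a finite union of cosets of $\Gamma\cap\Or(\mathrm f,R_k)$ and taking the span of the translates of the standard $R_k$-lattice, one sees that $\Gamma$ preserves some full $R_k$-lattice $L\subset k^{n+1}$. Then $\Gamma$ fixes $v$ and preserves $L\oplus R_k v$, so $\Gamma < \Gamma^\ast := \{\gamma\in\Or(\mathrm g) : \gamma(L\oplus R_kv)=L\oplus R_kv\}$, a genuine arithmetic lattice of $\Or(\mathrm g,\matR)$ of simplest type, commensurable with $\Or(\mathrm g,R_k)$, defined over $k$, and stabilising $\matH^n$ up to finite index.

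The substantial part is to pass from $\Gamma^\ast$ to a finite-index, torsion-free, orientation-preserving subgroup $\Lambda < \Gamma^\ast$ for which the totally geodesic immersion $M^n\looparrowright \matH^{n+1}/\Gamma^\ast$ of the finite-volume orbifold lifts to an \emph{embedding} $M^n\hookrightarrow M^{n+1} := \matH^{n+1}/\Lambda$. This amounts to two requirements: that $\Lambda$ meet the $\Gamma^\ast$-stabiliser of $\matH^n$ exactly in $\Gamma$, and that no $\Lambda$-translate of $\matH^n$ other than $\matH^n$ itself intersect $\matH^n$. Because both $M^n$ and $\matH^{n+1}/\Gamma^\ast$ have finite volume, the immersed $M^n$ has only finitely many self-intersection strata, so only finitely many double cosets of the stabiliser are responsible; each of the resulting finitely many conditions can be arranged by a suitable finite-index subgroup of $\Gamma^\ast$ containing $\Gamma$, using that geometrically finite subgroups of arithmetic lattices of simplest type are separable (GFERF, proved by Bergeron--Haglund--Wise \cite{BHW} in the cocompact case and available in general). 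Intersecting these finitely many subgroups, and then a deep enough torsion-free orientation-preserving congruence subgroup (Selberg's lemma), yields $\Lambda$. As $[\Gamma^\ast:\Lambda]<\infty$ we get $\Lambda < \Or(\mathrm g,k)$ and, when $k\neq\matQ$, cocompactness of $\Lambda$ (since $\mathrm g$ stays anisotropic over $k$), hence compactness of $M^{n+1}$; all the claimed properties then follow.

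The main obstacle is precisely this separability input, together with the verification that only finitely many double cosets can produce self-intersections. Granting these, the construction is the standard LERF ``unwrapping'' of an immersed totally geodesic submanifold, and the rest of the bookkeeping --- keeping everything defined over $k$, preserving orientability and compactness, and removing torsion --- is routine.
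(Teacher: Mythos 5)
Your proposal is correct and follows essentially the same route as the paper's sketch: extend the form to $\mathrm{g} = \mathrm{f}\perp\langle 1\rangle$, realise $\Gamma=\pi_1(M^n)$ inside an arithmetic lattice of $\Or(\mathrm{g},\matR)$ (the paper simply cites \cite[Proposition 2.1]{KRS}, where you unpack this by producing a $\Gamma$-invariant $R_k$-lattice), then invoke GFERF/separability from \cite{BHW} to find a torsion-free finite-index subgroup containing $\Gamma$ over which the totally geodesic immersion unwraps to an embedding, and finally use anisotropy (the paper cites \cite[Proposition 6.4.4]{Morris}) for compactness when $k\neq\matQ$. The only place where you gloss over a genuine subtlety is the passage to a torsion-free finite-index overgroup of $\Gamma$: simply intersecting with a deep torsion-free congruence subgroup would cut down $\Gamma$ itself, and the correct argument (as carried out in \cite[\S 6--7]{KRS}) uses that the closure of $\Gamma$ in the profinite completion of the ambient lattice is torsion-free; this is the same level of detail elided in the paper's own sketch, so it is a fair abbreviation.
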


\begin{proof}[Sketch of proof]
Choose the form $\mathrm{g}=y^2+\mathrm{f}$, where $y$ denotes a new coordinate. Notice that $\mathrm{g}$ has signature $(n+1,1)$ and is admissible over $k$ because so is $\mathrm{f}$. By \cite[Proposition 2.1]{KRS}, a torsion-free arithmetic lattice $\Gamma=\pi_1(M^n)<\mathrm{O}(\mathrm{f},k)$ injects in an arithmetic lattice $\Lambda<\mathrm{O}(\mathrm{g},k)<\mathrm{Isom}(\mathbb{H}^{n+1})$. Moreover the group $\Gamma$ is geometrically finite and therefore separable in $\Lambda$ by \cite{BHW}. Separabilty of $\Gamma$ allows us to find a torsion-free, finite index subgroup $\Lambda'<\Lambda$ which contains $\Gamma$, and such that $M^n$ embeds geodesically in $M^{n+1}=\mathbb{H}^{n+1}/_{\Lambda'}$. 
Finally, note that $M^n$ and $M^{n+1}$ are defined over the same field $k$. By \cite[Proposition 6.4.4]{Morris}, if $k$ is a proper extension of $\mathbb{Q}$, then both $M^n$ and $M^{n+1}$ are compact. 
\end{proof}

The hypothesis of Lemma \ref{lemma:embedding} hold in particular for $\pi_1(M)<\Gamma<\mathrm{O}(\mathrm{f},R_k)<\mathrm{O}(\mathrm{f},k)$.
We now build a sequence of $n$-dimensional manifolds $M^n$, $n\geq 4$, by choosing $M^4=M$ and repeatedly applying Lemma \ref{lemma:embedding} so that each $M^n$ embeds as a totally geodesic submanifold in $M^{n+1}$. 

Each $M^n$ is not spin by a standard argument: the manifold $M^4$ is not spin, hence $w_2(M^4)\neq 0$, and $M^n \subset M^{n+1}$ has a trivial normal bundle (since they are both orientable and the codimension is 1), so by the natural properties of the Stiefel-Whitney classes $w_2(M^n) \neq 0$ implies $w_2(M^{n+1})\neq 0$. 

More specifically, we have
$$w(TM^{n+1}|_{M^n})=w(TM^n)\smile w(\nu M^n) = w(TM^n).$$ 
If $w_2(TM^n) \neq 0$, then $w_2(TM^{n+1}\big|_{M^n}) \neq 0$ and by naturality of Stiefel-Whitney classes we also get $w_2(TM^{n+1})\neq 0$.


\begin{thebibliography}{99}

\bibitem{AS} \textsc{L.~Auslander, R.~Szczarba}, \emph{Characteristic classes of compact solvmanifolds},
Ann.~of Math.~\textbf{76} (1962), 1--8.

\bibitem{BHW} \textsc{N.~Bergeron, F.~Haglund, D. T.~Wise}, \emph{Hyperplane sections in arithmetic hyperbolic manifolds}, J.~Lond. Math. Soc., \textbf{83} (2011), 431--448.


\bibitem{BL} \textsc{R.~Benedetti, R.~Lisca}, \emph{Framing $3$-manifolds with bare hands}, to appear in Ens.~Math. 

\bibitem{BMM} \textsc{N.~Bergeron, J.~Millson, C.~Moeglin}, \emph{Hodge Type Theorems for Arithmetic Hyperbolic Manifolds}, in ``Geometry, analysis and probability,'' 47--56, Progr.~Math.,~\textbf{310}, Birkh\"auser/Springer, Cham, 2017.

\bibitem{BM} \textsc{B.~H.~Bowditch, G.~Mess}, \emph{A 4-dimensional Kleinian group}, Trans.~Amer.~Math.~Soc. \textbf{344} (1994), 391--405.

\bibitem{Bug} \textsc{V. O.~Bugaenko}, \emph{Groups of automorphisms of unimodular hyperbolic quadratic forms over the ring $\mathbb{Z}[(\sqrt{5}+1)/2]$}, Moscow Univ.~Math.~Bull., \textbf{39} (1984), 6--14.

\bibitem{CS} \textsc{S.~Cappell, J.~Shaneson}, \emph{Embedding and immersion of four-dimensional manifolds in $R^6$}, Geometric Top.~Proc.~of the 1977 Georgia Top.~Conf., Academic Press, New York, 1979, 301--305.

\bibitem{Da} \textsc{M.~Davis}, \emph{A hyperbolic 4-manifold}, Proc.~Amer.~Math.~Soc. \textbf{93} (1985), 325--328.

\bibitem{DS} \textsc{P.~Deligne, D.~Sullivan}, \emph{Fibr\'es vectoriels complexes \`a groupe structural discret}, C.~R.~Acad.~Sci.~Ser., \textbf{281} (1975), 1081--1083.

\bibitem{EP} \textsc{D.~Epstein, C. Petronio}, \emph{An exposition of Poincare's polyhedron theorem}, Enseign. Math. \textbf{40} (1994), 113--170.

\bibitem{GLT} \textsc{M.~Gromov, B.~Lawson, W.~Thurston}, \emph{Hyperbolic 4-manifolds and conformally flat 3-manifolds}, Publ.~Math., Inst.~Hautes \'Etud.~Sci.~\textbf{68}  (1988), 27--45.

\bibitem{H} \textsc{A.~Haefliger}, \emph{Sur l'extension du groupe structural d'un espace fibr\'e}, C.~R.~Acad.~Sci.~Paris. \textbf{243}, (1956), 558--560.

\bibitem{KPV} \textsc{M.~Kapovich, L.~Potyagailo, E.~B.~Vinberg},  \emph{Non-coherence of some non-uniform lattices in $\Iso(\matH^n)$}, Geometry \& Topology Monographs, \textbf{14} (2008), 335--351.

\bibitem{KMT} \textsc{A.~Kolpakov, B.~Martelli, S~Tschantz}, \emph{Some hyperbolic three-manifolds that bound geometrically}, Proc.~Amer.~Math.~Soc.~\textbf{143} (2015), 4103--4111.

\bibitem{KRS} \textsc{A.~Kolpakov, A.~Reid, L.~Slavich}, \emph{Embedding arithmetic hyperbolic manifolds}, Math.~Res.~Lett. \textbf{25} (2018), 1305--1328.

\bibitem{K} \textsc{N.~Kuiper}, \emph{Hyperbolic manifolds and tessellations}, Publ.~Math., Inst.~Hautes \'Etud.~Sci.~\textbf{68}  (1988), 47--76.

\bibitem{LR} \textsc{D.~Long, A.~Reid}, \emph{Virtually spinning hyperbolic manifolds}, to appear in Proc.~Edinburgh~Math.~Soc. 


\bibitem{MZ} \textsc{J.~Ma, F.~Zheng}, \emph{Orientable hyperbolic 4-manifolds over the 120-cell}, {\tt arXiv:1801.08814}

\bibitem{Ma} \textsc{B.~Martelli}, \emph{Hyperbolic four-manifolds}, ``Handbook of Group Actions, Volume III,'' Advanced Lectures in Mathematics series \textbf{40} (2018), 37--58.

\bibitem{Ma2} \textsc{B.~Martelli}, \emph{Hyperbolic three-manifolds that embed geodesically}, {\tt arXiv:1510.06325}

\bibitem{Morris} \textsc{D. W.~Morris} ,
\emph{Introduction to Arithmetic Groups}, Deductive Press (2015). 
Available online at \url{https://arxiv.org/abs/math/0106063}.

\bibitem{PP} \textsc{D. Panov, A. Petrunin}, \emph{Telescopic actions}, Geom.~Funct.~Anal.~\textbf{22} (2012), 1814--1831.

\bibitem{RRT} \textsc{J.~Ratcliffe, D.~Ruberman, S.~Tschantz}, \emph{Harmonic spinors on the Davis hyperbolic 4-manifold}, {\tt arXiv:1803.06382}

\bibitem{RT} \textsc{J.~Ratcliffe, S.~Tschantz}, \emph{On the Davis hyperbolic 4-manifold}, Topol.~Appl.~\textbf{111} (2001), 327--342.

\bibitem{Sc} \textsc{A.~Scorpan}, \emph{The wild world of $4$-manifolds}, American Mathematical Society, Providence, RI, 2005.

\bibitem{Su} \textsc{D.~Sullivan}, \emph{Hyperbolic geometry and homeomorphisms}, in Geometric Topology
 (Proc. Georgia Topology Conf., Athens, GA, 1977), Academic Press, New York, 1979, pp. 543

\bibitem{VS} \textsc{E. B.~Vinberg, O. V.~Shvartsman}, \emph{Discrete groups of motions of spaces of constant curvature}, Geometry II, Encyc. Math. Sci. \textbf{29} Springer (1993), 139--248.
 
 
 \end{thebibliography}
\end{document}